\theoremstyle{plain}
\newtheorem{theorem}{Theorem}[section]
\newtheorem{lemma}[theorem]{Lemma}
\newtheorem{proposition}[theorem]{Proposition}
\theoremstyle{definition}
\newtheorem{definition}[theorem]{Definition}
\newtheorem{example}[theorem]{Example}
\newif\if@my@Hidebox \@my@Hideboxtrue
\newbox\@my@Hidebox
\def\myHidebox{\setbox\@my@Hidebox\vbox\bgroup}
\def\endmyHidebox{\egroup
   \if@my@Hidebox
      \unvbox\@my@Hidebox\par
   \else\par
   \fi
}
\def\showmyHidebox{\@my@Hideboxtrue}
\def\hidemyHidebox{\@my@Hideboxfalse}
\def\showmyproof{\@my@Hideboxtrue}
\def\hidemyproof{\@my@Hideboxfalse}
\let\le\leqslant
\let\ge\geqslant
\def\Z{\mathbb Z}
\def\St{\operatorname{St}}
\def\supp{\operatorname{supp}}
\def\diam{\operatorname{diam}}
\def\minTL{\mathcal L}
\begin{document}

\title
[Minimal asymptotic translation length of right-angled Artin groups]
{An upper bound of the minimal asymptotic translation length
of right-angled Artin groups \\
on extension graphs}

\author{Eon-Kyung Lee and Sang-Jin Lee}
\address{Department of Mathematics and Statistics, Sejong University, Seoul, Korea}
\email{eonkyung@sejong.ac.kr}

\address{Department of Mathematics, Konkuk University, Seoul, Korea}
\email{sangjin@konkuk.ac.kr}
\date{\today}

\begin{abstract}
For the right-angled Artin group action on the extension graph,
it is known that the minimal asymptotic translation length is
bounded above by 2 provided that the defining graph has diameter at least 3.
In this paper, we show that the same result holds
without any assumption.
This is done by exploring some graph theoretic properties of
biconnected graphs, i.e.\ connected graphs whose complement is also connected.

\medskip\noindent
{\em Keywords\/}:
right-angled Artin groups, extension graphs, translation length, biconnected graphs \\
{\em 2020 Mathematics Subject Classification\/}: 20F36, 20F65, 20F69, 57M15, 57M60
\end{abstract}

\maketitle

%\tableofcontents

\section{Introduction}

For a simplicial graph $\Gamma = ( V(\Gamma), E(\Gamma))$,
let $d_\Gamma(v_1,v_2)$ denote the graph metric on $V(\Gamma)$,
i.e.\ the length of the shortest path in $\Gamma$ from $v_1$ to $v_2$.
Let $\diam(\Gamma)$ denote the \emph{diameter} of $\Gamma$,
i.e.\ $\diam(\Gamma)=\max\{ d_\Gamma(v_1,v_2): v_1, v_2\in V(\Gamma)\}$.
Let $\bar\Gamma$ denote the \emph{complement graph} of $\Gamma$,
i.e.\ $V(\bar\Gamma)=V(\Gamma)$ and
$E(\bar\Gamma)=\{\{v_1,v_2\}\subset V(\Gamma): v_1\ne v_2,\
\{v_1,v_2\}\not\in E(\Gamma)\}$.
$\Gamma$ is called \emph{nontrivial} if $|V(\Gamma)|\ge 2$.
$\Gamma$ is called \emph{biconnected} if both
$\Gamma$ and $\bar\Gamma$ are connected.

The \emph{right-angled Artin group} $A(\Gamma)$
on a finite simplicial graph $\Gamma$ is the group generated by
the vertices of $\Gamma$ such that two generators commute if they are adjacent,
hence
$$
A(\Gamma)=\langle\, v\in V(\Gamma) : v_iv_j=v_jv_i\ \
\mbox{if $\{v_i,v_j\}\in E(\Gamma)$}\,\rangle.
$$

The \emph{extension graph} $\Gamma^e$ of $\Gamma$ is the graph such that
the vertex set $V(\Gamma^e)$ is the set of all elements of $A(\Gamma)$
that are conjugate to a vertex of $\Gamma$,
and two vertices $v_1^{g_1}$ and $v_2^{g_2}$
are adjacent in $\Gamma^e$ if and only if
they commute as elements of $A(\Gamma)$.
(Here, $v^g$ denotes the conjugate $g^{-1}vg$.)
Therefore
\begin{align*}
V(\Gamma^e) & =\{v^g: v\in V(\Gamma),\ g\in A(\Gamma)\},\\
E(\Gamma^e) & =\{\, \{ v_1^{g_1}, v_2^{g_2}\}\,:\,
v_1^{g_1}, v_2^{g_2} \in V(\Gamma^e),\
v_1^{g_1}v_2^{g_2}=v_2^{g_2}v_1^{g_1}\ \mbox{in $A(\Gamma)$}\,\}.
\end{align*}
Extension graphs are usually infinite and locally infinite.
They are very useful
in the study of right-angled Artin groups
such as the embeddability problem
between right-angled Artin groups~\cite{KK13, KK14a, LL16, LL18}.
Let $d_e$ denote the graph metric $d_{\Gamma^e}$.
It is known that $(\Gamma^e,d_e)$ is a quasi-tree,
and hence a hyperbolic graph~\cite{KK13}.

Throughout this paper, all graphs are simplicial, and assumed to be finite
except for extension graphs.

\begin{definition}
Suppose a group $G$ acts on a connected metric space $(X, d)$
by isometries from right.
The \emph{asymptotic translation length} of an element $g\in G$,
denoted $\tau_{(X,d)}(g)$ or $\tau(g)$,
is defined by
$$
\tau(g)= \lim_{n\to\infty} \frac{d(xg^n, x)}{n},
$$
where $x\in X$.
This limit always exists, is independent of the choice of $x$,
and satisfies
$\tau(g^n) = |n|\tau(g)$
and $\tau(h^{-1}gh) = \tau(g)$ for all $g, h\in G$ and $n\in\Z$.
If $\tau(g)>0$, $g$ is called \emph{loxodromic}.
If $\{d(xg^n, x)\}_{n=1}^\infty$ is bounded,
$g$ is called \emph{elliptic}.
If $\tau(g)=0$ and $\{d(xg^n, x)\}_{n=1}^\infty$ is unbounded,
$g$ is called \emph{parabolic}.
The \emph{minimal asymptotic translation length} of $G$
for the action on $(X,d)$, denoted $\minTL_{(X,d)}(G)$,
is defined by
$$
\minTL_{(X,d)}(G) = \min\{ \tau_{(X,d)}(g):
g\in G,~\tau_{(X,d)}(g)>0\}.
$$
\end{definition}

The (minimal) asymptotic translation lengths have been studied
for the actions of various geometric groups
(see \cite{LL07, GT11, KS19, BS20, Gen22, BSS23} for example).
For the action of $A(\Gamma)$ on $(\Gamma^e,d_e)$,
%the following is shown by Baik, Seo and Shin,
Baik, Seo and Shin showed the following
which is an analogue of the theorem of Bowditch~\cite[Theorem 1.4]{Bow08}.

\begin{theorem}[{\cite[Theorem A]{BSS23}}]
Let $\Gamma$ be a nontrivial biconnected graph.
For the action of $A(\Gamma)$ on $(\Gamma^e,d_e)$,
all loxodromic elements have rational asymptotic translation lengths.
If\/ $\Gamma$ has girth at least 6 in addition,
the asymptotic translation lengths have a common denominator.
\end{theorem}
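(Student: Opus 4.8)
The plan is to transport Bowditch's tight-geodesic method \cite{Bow08} from the curve complex to the quasi-tree $(\Gamma^e,d_e)$. The first ingredient is a combinatorial class of preferred geodesics. I would single out a notion of \emph{tight geodesic} in $\Gamma^e$, defined through the right-angled Artin group structure: for instance, requiring each interior vertex $v^g$ of a geodesic path to have maximal support $\supp(v^g)$ among all vertices that commute with its two neighbours and lie on a geodesic between them, in analogy with Masur--Minsky tightness. The crucial property to establish is a \emph{uniform finiteness} statement: between any two vertices $u,w\in V(\Gamma^e)$ there are only finitely many tight geodesics, and their number is bounded by a function of $d_e(u,w)$ alone. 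Because $\Gamma^e$ is locally infinite, this cannot come from naive local finiteness; instead I would extract it from the acylindricity of the $A(\Gamma)$-action on $\Gamma^e$ \cite{KK13} together with the quasi-tree (bottleneck) geometry, which forces every tight geodesic from $u$ to $w$ to pass uniformly close to each coarse midpoint and hence to be determined, up to finitely many choices, by bounded local data.

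With uniform finiteness in hand, I would fix a loxodromic element $g$ and study the tight geodesics joining the basepoint $x$ to $xg^n=v_0^{g^n}$ as $n\to\infty$. Since $\Gamma^e$ is hyperbolic and $g$ is loxodromic, these segments fellow-travel a common quasi-axis; tightness and finiteness then allow me to pass to a limit on every bounded subsegment and produce a bi-infinite \emph{tight axis} $L$ invariant under $\langle g\rangle$ up to the permutation of a finite family of parallel tight axes $L,gL,\dots,g^{k-1}L$, all lying within bounded Hausdorff distance of one another. Here the integer $k\ge 1$ is controlled by the uniform finiteness constant, and it is precisely the source of a possibly nontrivial denominator: $g$ need not preserve a single tight axis but may cyclically permute several of them.

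Rationality then follows from periodicity. The combinatorial type of the vertices along $L$ takes values in a finite set (again by finiteness), so $g^k$ preserves $L$ and shifts its vertices, indexed by $\Z$, by a fixed integer $m\ge 1$. Hence $\tau(g^k)=m$ and $\tau(g)=\tau(g^k)/k=m/k\in\mathbb{Q}$, where the denominator $k$ is the length of the cyclic family of parallel tight axes permuted by $g$. When $\Gamma$ has girth at least $6$ — so that $\Gamma$ is triangle- and square-free and commutation in $A(\Gamma)$ is highly constrained — I expect the tight geodesic between two vertices to be essentially unique, or at least the multiplicity $k$ to be bounded by a constant $K(\Gamma)$ depending only on $\Gamma$ rather than on $g$. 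Such a uniform bound forces every loxodromic translation length into $\tfrac{1}{K(\Gamma)!}\,\Z$, giving a common denominator.

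The main obstacle throughout is the uniform finiteness of tight geodesics in the locally infinite graph $\Gamma^e$: one must choose the tightness condition delicately enough that finiteness holds, yet loosely enough that the class still contains an approximating axis for \emph{every} loxodromic element. This is exactly where the biconnectedness of $\Gamma$ (equivalently, the absence of a free or direct product decomposition of $A(\Gamma)$) should enter, ensuring that the action is acylindrical and that tight geodesics are genuinely pinned near their midpoints. Once finiteness is secured, the passage to a tight axis and the resulting rationality are formal consequences of hyperbolicity and $\langle g\rangle$-equivariance, and the girth hypothesis serves only to make the denominator bound uniform across all loxodromic elements.
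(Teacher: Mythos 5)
First, a point of orientation: the statement you are proving is quoted in this paper from \cite{BSS23} (it is their Theorem~A), and the present paper contains no proof of it whatsoever --- it is used as background. So there is no in-paper argument to compare against; your proposal can only be judged as a reconstruction of the Bowditch-style proof that the authors of \cite{BSS23} carry out, and which this paper explicitly flags as the analogue of \cite[Theorem~1.4]{Bow08}. At the level of strategy you have the right skeleton: a distinguished class of ``tight'' geodesics, a finiteness statement for them, extraction of a $\langle g\rangle$-invariant finite family of axes, and rationality from integer translation on a preserved geodesic line, with the girth hypothesis entering only to make the multiplicity bound uniform in $g$.

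As a proof, however, the central step is missing rather than merely deferred. The uniform finiteness of tight geodesics \emph{is} the theorem: everything after it (``rationality then follows from periodicity'') is the formal part. Your proposed tightness condition --- interior vertices of maximal support among common neighbours --- is not shown to cut an infinite set down to a finite one, and it is not clear that it does: in $\Gamma^e$ two vertices at distance $2$ can have infinitely many common neighbours, and ``maximal $\supp$'' need not be achieved by finitely many of them (supports of conjugates $v^g$ can be arbitrarily large). Your fallback, that finiteness is ``forced'' by acylindricity plus the bottleneck property of the quasi-tree, is not a valid deduction: those properties constrain \emph{where} geodesics go (uniformly close to midpoints), but in a locally infinite graph they place no bound on \emph{how many} geodesics pass through a bounded region. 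Some genuinely combinatorial input about $A(\Gamma)$ (normal forms, the structure of stars and links in $\Gamma^e$) is needed to define the preferred paths and count them, and this is precisely what \cite{BSS23} supply. Likewise, the girth~$\ge 6$ clause is handled only by ``I expect the tight geodesic to be essentially unique,'' which is a restatement of the desired conclusion, not an argument. So the proposal is a plausible roadmap consistent with the known proof, but with the two load-bearing lemmas (uniform finiteness; uniformity of the denominator under the girth hypothesis) asserted rather than established, it does not yet constitute a proof.
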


As in the above theorem, it is natural to require that ``$\Gamma$ is a nontrivial biconnected graph''
when we consider the action of $A(\Gamma)$ on $\Gamma^e$,
because $\Gamma^e$ is connected with infinite diameter
if and only if $\Gamma$ is nontrivial and biconnected~\cite[Lemma 3.5]{KK13}.
Observe that if $|V(\Gamma)|\in\{2,3\}$
then either $\Gamma$ or $\bar\Gamma$ is disconnected.
Hence the statement that ``$\Gamma$ is a nontrivial biconnected graph''
implies $|V(\Gamma)|\ge 4$.

\smallskip

For the lower bound of the minimal asymptotic translation length
of $A(\Gamma)$ on $(\Gamma^e,d_e)$,
it follows from a result of Kim and Koberda~\cite[Lemma 33]{KK14b} that
$\minTL_{(\Gamma^e,\,d_e)}(A(\Gamma))
\ge \frac1{2|V(\Gamma)|^2}$.
This lower bound is improved to
$\minTL_{(\Gamma^e,\,d_e)}(A(\Gamma))
\ge \frac1{|V(\Gamma)|-2}$
in~\cite[Theorem 6.5]{LL22}.

For the upper bound, Baik, Seo and Shin showed the following.

\begin{theorem}[{\cite[Theorem F]{BSS23}}]
Let $\Gamma$ be a nontrivial biconnected graph.
If\/ $\diam(\Gamma)\ge 3$, then
$$\minTL_{(\Gamma^e,\,d_e)}(A(\Gamma))\le 2.$$
\end{theorem}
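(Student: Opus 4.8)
The plan is to exhibit a single loxodromic element $g\in A(\Gamma)$ with $\tau_{(\Gamma^e,d_e)}(g)\le 2$; since $\minTL$ is an infimum over loxodromic elements, this suffices. The mechanism is the elementary bound
$$\tau(g)\le d_e(v,\,v\cdot g)$$
valid for every vertex $v\in V(\Gamma^e)$: writing $w_i=v\cdot g^i$, the points $w_i$ form a $g$-orbit, so by isometry $d_e(w_i,w_{i+1})=d_e(v,v\cdot g)$ for all $i$, and the triangle inequality gives $d_e(v,v\cdot g^n)\le n\,d_e(v,v\cdot g)$, whence $\tau(g)=\lim_n d_e(v,v\cdot g^n)/n\le d_e(v,v\cdot g)$. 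Thus it is enough to produce $g$ and $v$ with $v\cdot g\ne v$, with $v$ and $v\cdot g$ at distance at most $2$ in $\Gamma^e$, and with $g$ loxodromic.

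To build such data I would use the diameter hypothesis. Since $\diam(\Gamma)\ge 3$, there are vertices at distance $3$; a geodesic realizing this distance yields an induced path $a-x-y-b$ with edges $\{a,x\},\{x,y\},\{y,b\}\in E(\Gamma)$ and non-edges $\{a,y\},\{x,b\},\{a,b\}$. I would take $g=ab$ and base vertex $v=x$. Because $x$ commutes with $a$, one computes $x\cdot g=x^{ab}=b^{-1}(a^{-1}xa)b=b^{-1}xb=x^b$, which differs from $x$ since $x$ and $b$ do not commute. Now $y$ commutes with both $x$ and $b$, hence centralizes the whole subgroup $\langle x,b\rangle$, and in particular commutes with $x^b=b^{-1}xb$; therefore $y$ is adjacent in $\Gamma^e$ to both $x$ and $x^b$. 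This exhibits a path $x-y-x^b$ of length $2$, so $d_e(x,\,x\cdot g)\le 2$ and consequently $\tau(ab)\le 2$.

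What remains is to verify that $ab$ is loxodromic, i.e.\ that $\tau(ab)>0$. This is the step I expect to be the main obstacle, since it demands a lower bound rather than an explicit short path. I would argue that the support $\{a,b\}$ lies in the star $\St(w)$ of no vertex $w$: a common neighbor of $a$ and $b$ would force $d_\Gamma(a,b)\le 2$, contradicting $d_\Gamma(a,b)=3$, and $a,b$ are themselves non-adjacent, so no star contains both. By the standard characterization of loxodromic elements for this action, an element whose (cyclically reduced) support lies in no star acts loxodromically, so $ab$ is loxodromic; alternatively, if the action is known to be acylindrical one may rule out parabolics and then check that the orbit $\{x^{(ab)^n}\}$ is unbounded, for instance by showing that the concatenation of the length-$2$ segments $x^{(ab)^i}-y^{(ab)^i}-x^{(ab)^{i+1}}$ is an unbounded quasigeodesic. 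Either route yields $0<\tau(ab)\le 2$, and hence $\minTL_{(\Gamma^e,d_e)}(A(\Gamma))\le 2$.

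Finally, I would record where the hypothesis enters: $\diam(\Gamma)\ge 3$ is exactly what produces the induced path on $\{a,x,y,b\}$ and, crucially, the non-adjacency of $a$ and $b$ together with the absence of a common neighbor, which is what makes $ab$ loxodromic. Removing this hypothesis, the goal of the present paper, should require replacing this explicit configuration by a more delicate one extracted from the graph theory of biconnected graphs.
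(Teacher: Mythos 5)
Your argument is correct, and it reaches the bound by a genuinely leaner route than the machinery this paper sets up for its more general Theorem~\ref{thm:tl}. The core mechanism is the same: your length-two path from $x$ to $x^{b}=x^{ab}$ through $y$, iterated along the orbit, is exactly the zig-zag of Lemma~\ref{lem:tl2} applied with $v_1=x$, $v_2=y$, $g_1=a\in Z(x)$, $g_2=b\in Z(y)$. Where you differ is in the choice of loxodromic element: the paper's route (Lemma~\ref{lem:diam3} together with Lemma~\ref{lem:len2}) takes the middle edge $e$ of a geodesic of length $3$ and uses the product of \emph{all} vertices of $\St_\Gamma(e)$, which requires first proving that $\Gamma[\St_\Gamma(e)]$ is biconnected; you instead take the bare product $ab$ of the two geodesic endpoints, whose support $\{a,b\}$ trivially induces a connected subgraph of $\bar\Gamma$ and dominates $\bar\Gamma$ precisely because $d_\Gamma(a,b)=3$ rules out a common neighbour (this is part (ii) of the Claim inside Lemma~\ref{lem:diam3}). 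Your element is simpler and your verification shorter; the paper's heavier construction is what survives the passage to diameter $2$, where no such pair $a,b$ exists. One caution: the criterion you quote, ``a cyclically reduced element whose support lies in no star is loxodromic,'' is not the correct general characterization --- the right condition (Lemma~\ref{lem:loxo}) is that the support is contained in no \emph{subjoin}, equivalently that $\bar\Gamma[\supp(g)]$ is connected and $\supp(g)$ dominates $\bar\Gamma$; a support spanning an induced square lies in no star yet gives an elliptic element. For a two-element non-adjacent support the two conditions happen to coincide, and the facts you actually verify (non-adjacency of $a,b$ plus the absence of a common neighbour) are exactly what the correct criterion demands, so your proof stands as written once that citation is repaired.
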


The graphs with diameter $2$ form a large class of graphs.
Therefore it would be natural to ask whether the same upper bound
holds for these graphs.
The following is the main result of this paper,
which shows that the above upper bound
holds without any assumption.

\medskip\noindent
\textbf{Theorem A} (Theorem~\ref{thm:tl})\ \
\em
For any nontrivial biconnected graph  $\Gamma$,
$$\minTL_{(\Gamma^e,\,d_e)}(A(\Gamma))\le 2.$$
\par\medskip\upshape

If $\diam(\bar\Gamma)$ is large,
we obtain a better upper bound as follows.

\medskip\noindent
\textbf{Theorem B} (Theorem~\ref{thm:d_TL})\ \
\em
Let $\Gamma$ be a biconnected graph.
If\/ $d=\diam(\bar\Gamma)\ge 3$, then
$$\minTL_{(\Gamma^e,\,d_e)}(A(\Gamma))\le \frac{2}{d-2}.$$
\par\medskip\upshape

Theorem B says that if $d=\diam(\bar\Gamma)$ is large,
then $\minTL_{(\Gamma^e,\,d_e)}(A(\Gamma))$ is small.
One may ask whether the converse holds, i.e.\
if $\minTL_{(\Gamma^e,\,d_e)}$ is small, then $\diam(\bar\Gamma)$ is large.
The following theorem shows that this is not the case.

\medskip\noindent
\textbf{Theorem C} (Theorem~\ref{thm:V_TL})\ \
For any $n\ge 4$, there exists a biconnected graph $\Gamma_n$
such that
$$|V(\Gamma_n)|=n+2, \quad \diam(\Gamma_n)=\diam(\overline{\Gamma_n})=2,
\quad
\minTL_{(\Gamma_n^e,\,d_e)}(A(\Gamma_n))\le \frac{2}{n-3}.$$
\par\medskip\upshape

\section{Preliminaries}

\subsection{Notations on graphs}

For $v\in V(\Gamma)$ and $A\subset V(\Gamma)$,
the \emph{stars} $\St_\Gamma(v)$ and $\St_\Gamma(A)$
are defined as
\begin{align*}
\St_\Gamma(v) &=\{u\in V(\Gamma):d_\Gamma(u,v)\le 1\},\\
\St_\Gamma(A) &=\bigcup\nolimits_{v\in A}\St_\Gamma(v)
    =\{u\in V(\Gamma):d_\Gamma(u,v)\le 1~\mbox{for some $v\in A$}\}.
\end{align*}
For an edge $e=\{v_1,v_2\}$,
the star $\St_\Gamma(\{v_1,v_2\})$ is also denoted by $\St_\Gamma(e)$.

\begin{definition}[dominate]
Let $A$ and $B$ be subsets of $V(\Gamma)$.
If $B\subset \St_\Gamma(A)$
(i.e.\ each vertex $v\in B$ is either contained in $A$ or
adjacent to a vertex in $A$),
then we say that $A$ \emph{dominates} $B$ in $\Gamma$.
If $A$ dominates $V(\Gamma)$ in $\Gamma$,
then we simply say that $A$ \emph{dominates} $\Gamma$.
\end{definition}
From the above definition, the statement that $A$ dominates $\bar\Gamma$ means $\St_{\bar\Gamma}(A)= V(\Gamma)$.

For $A \subset V(\Gamma)$,
$\Gamma[A]$ denotes the \emph{subgraph of $\Gamma$ induced by $A$}, i.e.\
$$
V(\Gamma[A])=A,\qquad
E(\Gamma[A])=\{\,\{v_1,v_2\} : v_1,v_2\in A,\ \{v_1,v_2\}\in E(\Gamma)\}.
$$

A graph $\Gamma_0$ is called an \emph{induced subgraph} of $\Gamma$
if $\Gamma_0=\Gamma[A]$ for some $A\subset V(\Gamma)$.
Notice the following.
\begin{itemize}
\item[(i)]
For $A\subset V(\Gamma)$, $\bar\Gamma[A]=\overline{\Gamma[A]}$,
i.e.\ the subgraph of $\bar\Gamma$ induced by $A$
is the same as the complement of the subgraph of $\Gamma$ induced by $A$.
\item[(ii)] If $\Gamma_0$ is an induced subgraph of $\Gamma$
and $B\subset V(\Gamma_0)$, then $\Gamma[B]=\Gamma_0[B]$,
i.e.\ $B$ induces the same subgraph in $\Gamma$ and in $\Gamma_0$.
\end{itemize}

For graphs $\Gamma_1$ and $\Gamma_2$,
the \emph{disjoint union} $\Gamma_1\sqcup \Gamma_2$
is the graph such that
\begin{align*}
V(\Gamma_1\sqcup\Gamma_2) &=V(\Gamma_1)\sqcup V(\Gamma_2),\\
E(\Gamma_1\sqcup\Gamma_2) &=E(\Gamma_1)\sqcup E(\Gamma_2).
\end{align*}

The \emph{join} $\Gamma_1 * \Gamma_2$ is the graph such that
$\overline{\Gamma_1 * \Gamma_2} =  \overline{\Gamma_1} \sqcup \overline{\Gamma_2}$, hence
\begin{align*}
V(\Gamma_1*\Gamma_2) &=V(\Gamma_1)\sqcup V(\Gamma_2),\\
E(\Gamma_1*\Gamma_2) &=E(\Gamma_1)\sqcup E(\Gamma_2)\sqcup
\{\,\{v_1,v_2\}: v_1\in V(\Gamma_1),~v_2\in V(\Gamma_2)\,\}.
\end{align*}

A graph is called a {\em join} if it is the join of two nonempty graphs.
A subgraph that is a join is called a {\em subjoin}.

The \emph{path graph} $P_n=P_n(v_1,\ldots,v_n)$
is the graph with $V(P_n)=\{v_1,\ldots,v_n\}$
and $E(P_n)=\{ \{v_i,v_{i+1}\}:1 \leqslant i \leqslant n-1\}$.
For example, $P_4(v_1,v_2,v_3,v_4)$ looks like
$\begin{xy}/r.7mm/:
(0,0)="a" *+!U{v_1} *{\bullet},
"a"+(10,0)="a" *+!U{v_2} *{\bullet},
"a"+(10,0)="a" *+!U{v_3} *{\bullet},
"a"+(10,0)="a" *+!U{v_4} *{\bullet},
(0,0); "a" **@{-},
\end{xy}$
and its complement is $P_4(v_3,v_1,v_4,v_2)$.
Notice that $P_n$ is biconnected for all $n\ge 4$ and
that $P_4$ is the only biconnected graph with four vertices.

\subsection{Right-angled Artin groups}

An element of $V(\Gamma)^{\pm 1}=V(\Gamma)\cup V(\Gamma)^{-1}$
is called a \emph{letter}.
A \emph{word} means a finite sequence of letters.
Suppose that $g\in A(\Gamma)$ is expressed as a word $w$.
The word $w$ is called \emph{reduced} if $w$ is a shortest
word among all the words representing $g$.
In this case, the length of $w$ is called the \emph{word length} of $g$.

\begin{definition}[support]
For $g\in A(\Gamma)$, the \emph{support} of $g$, denoted $\supp(g)$,
is the set of generators that appear in a reduced word representing $g$.
\end{definition}
It is known that $\supp(g)$ is well defined (see~\cite{HM95}),
i.e.\ it does not depend on the choice of a reduced word
representing $g$.

\begin{definition}[cyclically reduced]
An element $g\in A(\Gamma)$ is called \emph{cyclically reduced}
if it has the minimal word length in its conjugacy class.
\end{definition}

Notice that every positive word
(i.e.\ a word composed of only letters in $V(\Gamma)$)
is both reduced and cyclically reduced.

The following equivalences are well known (see \cite{KK14b, LL22} for example).

\begin{lemma}\label{lem:loxo}
Let $\Gamma$ be a nontrivial biconnected graph.
For a cyclically reduced element $g\in A(\Gamma)\setminus\{1\}$,
the following are equivalent:
\begin{itemize}
\item[(i)] $g$ is loxodromic on $(\Gamma^e, d_e)$;
\item[(ii)] $\supp(g)$ is not contained in a subjoin of $\Gamma$;
\item[(iii)]
$\bar\Gamma[\supp(g)]$ is connected
and $\supp(g)$ dominates $\bar\Gamma$.
\end{itemize}
\end{lemma}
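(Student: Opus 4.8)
The plan is to split Lemma~\ref{lem:loxo} into the elementary graph-theoretic equivalence (ii)$\Leftrightarrow$(iii) and the two dynamical implications (i)$\Rightarrow$(ii) and (ii)$\Rightarrow$(i); since $\Gamma$ is nontrivial and biconnected, $(\Gamma^e,d_e)$ is by \cite[Lemma 3.5]{KK13} the relevant connected quasi-tree of infinite diameter. Write $S=\supp(g)$, so $S\ne\emptyset$ as $g\ne 1$. I would first record two standard translations. A finite graph on at least two vertices is a join precisely when its complement is disconnected; combined with $\bar\Gamma[S]=\overline{\Gamma[S]}$, this gives that \emph{$\Gamma[S]$ is a join iff $\bar\Gamma[S]$ is disconnected}. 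Unwinding the definition of domination in the complement gives that \emph{$S$ dominates $\bar\Gamma$ iff no $w\in V(\Gamma)\setminus S$ is adjacent in $\Gamma$ to every vertex of $S$}. For (ii)$\Leftrightarrow$(iii) I argue by contraposition on both sides. If (iii) fails, then either $\bar\Gamma[S]$ is disconnected, so $\Gamma[S]$ is itself a subjoin containing $S$, or some external $w$ is adjacent in $\Gamma$ to all of $S$, so $\Gamma[S]*\{w\}$ is a subjoin containing $S$; in either case (ii) fails. Conversely, if $S$ sits inside a subjoin $\Gamma[A]=\Gamma_1*\Gamma_2$ with vertex parts $A_1,A_2$ both nonempty, then either $S$ meets both $A_i$, forcing all cross-edges and hence $\Gamma[S]$ a join with $\bar\Gamma[S]$ disconnected, or $S\subseteq A_1$, in which case any $w\in A_2$ lies outside $S$ and is adjacent in $\Gamma$ to all of $A_1\supseteq S$; thus (iii) fails. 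This settles (ii)$\Leftrightarrow$(iii).

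For (i)$\Rightarrow$(ii) I prove the contrapositive: if $S$ lies in a subjoin then $g$ is not loxodromic. By the equivalence just shown, (iii) fails, giving two cases. If some $w\in V(\Gamma)\setminus S$ is adjacent to every vertex of $S$, then $S\subseteq\St_\Gamma(w)$, and since the centralizer of the vertex $w$ in $A(\Gamma)$ is $\langle\St_\Gamma(w)\rangle$, we get $gw=wg$; hence $w^{g^n}=w$ for all $n$, the orbit of the vertex $w\in V(\Gamma^e)$ is a single point, and $\tau(g)=0$. If instead $\Gamma[S]$ is a join, write $S=S_1\sqcup S_2$ with $[S_1,S_2]=1$ and factor $g=g_1g_2$ with $g_i\in\langle S_i\rangle$, $\supp(g_i)=S_i$, and $[g_1,g_2]=1$. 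For a vertex $s\in S_2$ one computes $s^{g^n}=(s^{g_1^n})^{g_2^n}=s^{g_2^n}$, since $g_1$ commutes with $s$; thus the $g$-orbit of $s$ coincides with its $\langle g_2\rangle$-orbit. Iterating this product decomposition on the strictly smaller support and invoking the reducible-element analysis of \cite{KK14b} shows the orbit is bounded, so again $\tau(g)=0$. This yields (i)$\Rightarrow$(ii).

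The remaining implication (ii)$\Rightarrow$(i) is the substantial one and the main obstacle: here $S$ is contained in no subjoin and I must exhibit genuine linear drift of $g$ on $\Gamma^e$. The plan is to exploit that $(\Gamma^e,d_e)$ is a quasi-tree, where for this action the parabolic case does not occur, so it suffices to prove the orbit $n\mapsto xg^n$ is unbounded. To do so I would fix a cyclically reduced expression of $g$ and argue that the complement-connectedness of $\bar\Gamma[S]$ together with the domination of $\bar\Gamma$ by $S$ are exactly the conditions preventing the cancellation and backtracking that would otherwise trap the iterates $x,xg,xg^2,\dots$ inside a bounded region; concretely, these hypotheses guarantee that no single ``standard tree'' inside $\Gamma^e$ can absorb the whole orbit, so successive powers make definite progress along a quasi-geodesic axis. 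This local-to-global estimate on the quasi-tree is the technical heart of the statement, and it is precisely what is established in \cite{KK14b} and sharpened in \cite{LL22}; I would invoke their estimates to conclude that $\tau(g)>0$, closing the cycle (i)$\Rightarrow$(ii)$\Rightarrow$(i) and, together with (ii)$\Leftrightarrow$(iii), completing the proof.
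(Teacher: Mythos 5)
The paper does not actually prove Lemma~\ref{lem:loxo}: it records the equivalences as ``well known'' and points to \cite{KK14b, LL22}, so there is no in-paper argument to compare yours against in detail. Measured against that, your proposal does strictly more work on the elementary parts and the same amount on the hard part. Your (ii)$\Leftrightarrow$(iii) is correct and complete: the translations ``$\Gamma[S]$ is a join iff $\bar\Gamma[S]$ is disconnected'' and ``$S$ dominates $\bar\Gamma$ iff no outside vertex is $\Gamma$-adjacent to all of $S$'' are exactly the right reductions, and the case analysis is sound. Your (i)$\Rightarrow$(ii) is essentially right but has one loose joint: in the join case, the identity $s^{g^n}=s^{g_2^n}$ for $s\in S_2$ gives $\tau_e(g)=\tau_e(g_2)$ (translation length being basepoint-independent), and this alone does not rule out $g_2$ --- hence $g$ --- being loxodromic. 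You should close this by induction on $|\supp|$, noting that $\supp(g_2)=S_2$ is strictly smaller and still contained in a subjoin, and that $g_2$ is again cyclically reduced (conjugating $g_2$ within $\langle S_2\rangle$ conjugates $g$). Asserting instead that ``the orbit is bounded'' overclaims: a priori the induction only yields $\tau=0$, not ellipticity, and boundedness is not needed.

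The genuine gap is (ii)$\Rightarrow$(i). What you offer there --- the hypotheses ``prevent the cancellation and backtracking that would trap the iterates,'' ``no single standard tree can absorb the whole orbit'' --- is a heuristic, not an argument; nothing in the proposal produces a linear lower bound on $d_e(x, xg^n)$, and the auxiliary claim that parabolics do not occur for this action is itself a nontrivial theorem (it follows from acylindricity, established in \cite{LL22}, or from \cite[Lemma 33]{KK14b}). Since you end by invoking the very references the paper cites for the whole lemma, your treatment of this direction is no more self-contained than the paper's. That is acceptable given how the paper handles the statement, but you should present (ii)$\Rightarrow$(i) explicitly as a citation rather than as a sketch of a proof you have not carried out.
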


\section{Biconnected graphs}

In this section we study biconnected graphs.
Theorem~\ref{thm:2v} is the main result of this section.
We begin with three easy lemmas.

A vertex $v$ of $\Gamma$ is called
a \emph{cut vertex} of $\Gamma$ if $\Gamma\setminus v$ is disconnected.

\begin{lemma}\label{lem:cut}
Let $\Gamma$ be a biconnected graph.
If\/ $\Gamma$ has a cut vertex, then $\diam(\Gamma)\ge 3$.
\end{lemma}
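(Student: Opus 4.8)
The plan is to prove the contrapositive: assuming $\diam(\Gamma)\le 2$, I will show that $\Gamma$ has no cut vertex. So suppose $v$ is an arbitrary vertex of $\Gamma$; I want to show $\Gamma\setminus v$ is connected. Since $\Gamma$ is biconnected, both $\Gamma$ and $\bar\Gamma$ are connected, and the diameter hypothesis means that any two distinct vertices are either adjacent or have a common neighbor.

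First I would take any two vertices $u_1,u_2\in V(\Gamma)\setminus\{v\}$ and produce a path between them in $\Gamma\setminus v$. If $u_1$ and $u_2$ are adjacent, we are done. Otherwise, since $\diam(\Gamma)\le 2$, there is a common neighbor $w$ of $u_1$ and $u_2$. If $w\ne v$, then $u_1$–$w$–$u_2$ is a path avoiding $v$ and we are done. The only problematic case is when the \emph{unique} common neighbor available is $v$ itself, i.e.\ when $v$ is forced to lie on every short path between $u_1$ and $u_2$; this is precisely the obstruction I expect to be the crux of the argument.

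To handle that case I would use the connectivity of the complement $\bar\Gamma$, which has not yet been exploited. The idea is that biconnectedness prevents $v$ from being adjacent to everything: if $v$ were adjacent to all other vertices, then $v$ would be an isolated vertex of $\bar\Gamma$, contradicting the connectivity of $\bar\Gamma$. Hence there exists a vertex $z\ne v$ not adjacent to $v$ in $\Gamma$. Using such a vertex $z$ together with the diameter-$2$ condition, I can reroute: for the troublesome pair $u_1,u_2$ whose only common neighbor is $v$, I would find short paths from $u_1$ and from $u_2$ to this non-neighbor region of $v$, splicing them into a path that avoids $v$. Concretely, since $z$ is not adjacent to $v$ but every vertex is within distance $2$ of $z$ (as $\diam(\Gamma)\le 2$), each $u_i$ connects to $z$ through a neighbor that can be chosen distinct from $v$, because a common neighbor of $u_i$ and $z$ cannot be $v$ (as $v$ is not adjacent to $z$).

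The main obstacle, then, is the bookkeeping in this last step: I must ensure that the detour through the non-neighbors of $v$ genuinely avoids $v$ at every edge, and that such detours exist for \emph{every} bad pair simultaneously in the sense of yielding a connected $\Gamma\setminus v$. I expect the clean way to package this is to argue directly that $\Gamma\setminus v$ is connected by showing any vertex reaches some fixed non-neighbor $z$ of $v$ within $\Gamma\setminus v$, so that all of $V(\Gamma)\setminus\{v\}$ lies in one component. Once connectivity of $\Gamma\setminus v$ is established for every $v$, no cut vertex exists, which is the contrapositive of the claim.
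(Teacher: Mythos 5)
Your proof is correct, but it runs in the opposite direction from the paper's. The paper argues directly: given a cut vertex $v_0$, it splits $\Gamma\setminus v_0$ into two components, uses connectivity of $\bar\Gamma$ to find a vertex $v_1$ with $d_\Gamma(v_0,v_1)\ge 2$ (otherwise $\Gamma$ would be the join $\{v_0\}*(\Gamma\setminus v_0)$), picks $v_2$ in the other component, and concludes $d_\Gamma(v_1,v_2)=d_\Gamma(v_1,v_0)+d_\Gamma(v_0,v_2)\ge 3$ since every $v_1$--$v_2$ path must pass through $v_0$. You instead prove the contrapositive: assuming $\diam(\Gamma)\le 2$, you fix an arbitrary $v$, use connectivity of $\bar\Gamma$ to produce a non-neighbor $z$ of $v$ (the same key use of biconnectedness as in the paper), and show every vertex of $\Gamma\setminus v$ reaches $z$ by a path of length at most $2$ avoiding $v$ --- the crucial point being that a common neighbor of $u$ and $z$ cannot equal $v$ because $v$ is not adjacent to $z$. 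Your final paragraph correctly packages this as ``everything connects to the hub $z$ in $\Gamma\setminus v$,'' which closes the gap you worried about. The paper's direct argument is slightly shorter and hands you the explicit pair of vertices at distance $3$; your contrapositive is equally valid and arguably more transparent about \emph{why} diameter-$2$ biconnected graphs cannot have cut vertices, at the cost of some case bookkeeping that your hub reformulation eliminates.
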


\begin{proof}
Let $v_0$ be a cut vertex of $\Gamma$.
Then $\Gamma\setminus v_0$ is a disjoint union $\Gamma_1\sqcup \Gamma_1$
for some nontrivial subgraphs $\Gamma_1$ and $\Gamma_2$.
(See Figure~\ref{fig:cut}.)
If $d_{\Gamma}(v_0,v)\le 1$ for all $v\in V(\Gamma)$,
then $\Gamma=\{v_0\}*(\Gamma\setminus v_0)$,
which contradicts that $\bar\Gamma$ is connected.
Therefore there exists a vertex $v_1$ such that $d_{\Gamma}(v_0,v_1)\ge 2$.
We may assume $v_1\in\Gamma_1$.
Choose a vertex $v_2$ of $\Gamma_2$.
Because $v_0$ is a cut vertex, each path from $v_1$ to $v_2$ must pass through $v_0$.
Therefore $d_{\Gamma}(v_1,v_2)=d_{\Gamma}(v_1,v_0)+d_{\Gamma}(v_0,v_2)\ge 2+1= 3$,
hence $\diam(\Gamma)\ge 3$.
\end{proof}

\begin{figure}
$$
\begin{xy}/r1mm/:
(0,0)="or" *+!U{v_0} *{\bullet}, (15,0)="h", (0,3)="v",
"or";
"or"+"h"="po" **@{-}  *{\bullet},
"po"+(8,0) *\xycircle(13,8){-},
"po"+(21,-5) *!L{\Gamma_2},
"po"+"v" **@{-} *{\bullet},
"po"-"v"="v2" **@{-}  *+!L{v_2} *{\bullet},
%%%
"or"-"h"="po" **@{-} *{\bullet},
"po"-(8,0) *\xycircle(13,8){-},
"po"+(-20,-5) *!R{\Gamma_1},
"po"-"v" **@{-} *{\bullet},
"po"+"v"="v1" **@{-} *{\bullet};
"v1"-"h"="v1" **@{-}  *+!U{v_1} *{\bullet},
\end{xy}
$$
\caption{Figure for Lemma~\ref{lem:cut}.}\label{fig:cut}
\end{figure}
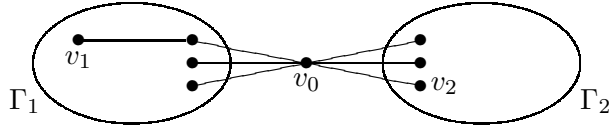

\begin{lemma}\label{lem:conn}
Let $A$ and $B$ be subsets of\/ $V(\Gamma)$
such that $A\subset B\subset\St_\Gamma(A)$.
If\/ $\Gamma[A]$ is connected then $\Gamma[B]$ is also connected.
\end{lemma}

\begin{proof}
Since $A\subset B$, the graph $\Gamma[B]$ is obtained from the connected graph $\Gamma[A]$
by adding some vertices and some edges.
Since $B\subset\St_\Gamma(A)$, each vertex in $\Gamma[B]\setminus\Gamma[A]$
is adjacent to a vertex in $\Gamma[A]$,
hence $\Gamma [B]$ is connected.
\end{proof}

\begin{lemma}\label{lem:domi}
For any $v\in V(\Gamma)$,
$\St_\Gamma(v)$ dominates $\bar\Gamma$.
For any $e\in E(\Gamma)$,
$\St_\Gamma(e)$ dominates $\bar\Gamma$.
\end{lemma}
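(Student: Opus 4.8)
The plan is to reduce both assertions to a single dichotomy about one vertex. Fix $v\in V(\Gamma)$. For any other vertex $w$, exactly one of $\{v,w\}\in E(\Gamma)$ or $\{v,w\}\in E(\bar\Gamma)$ holds, and in star language this reads
$$
V(\Gamma)=\St_\Gamma(v)\cup\St_{\bar\Gamma}(v),
$$
since for $w\ne v$ the membership of $w$ in $\St_\Gamma(v)$ (resp.\ in $\St_{\bar\Gamma}(v)$) means precisely that $w$ is adjacent to $v$ in $\Gamma$ (resp.\ in $\bar\Gamma$), while $v$ itself lies in both stars.

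First I would record two elementary facts about the star operator, both immediate from the definition $\St_{\bar\Gamma}(A)=\bigcup_{u\in A}\St_{\bar\Gamma}(u)$: (a)~$A\subseteq\St_{\bar\Gamma}(A)$ for every $A\subset V(\Gamma)$, because each $u\in A$ lies in $\St_{\bar\Gamma}(u)$; and (b)~$\St_{\bar\Gamma}$ is monotone, i.e.\ $A\subseteq A'$ implies $\St_{\bar\Gamma}(A)\subseteq\St_{\bar\Gamma}(A')$.

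To obtain the vertex statement I would apply these with $A=\St_\Gamma(v)$. Fact (a) gives $\St_\Gamma(v)\subseteq\St_{\bar\Gamma}(\St_\Gamma(v))$; and since $v\in\St_\Gamma(v)$, fact (b) gives $\St_{\bar\Gamma}(v)\subseteq\St_{\bar\Gamma}(\St_\Gamma(v))$. Taking the union of these two inclusions and feeding in the displayed dichotomy yields $V(\Gamma)\subseteq\St_{\bar\Gamma}(\St_\Gamma(v))$, hence $\St_{\bar\Gamma}(\St_\Gamma(v))=V(\Gamma)$, which is exactly the statement that $\St_\Gamma(v)$ dominates $\bar\Gamma$. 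The edge statement then comes for free: for $e=\{v_1,v_2\}$ we have $\St_\Gamma(v_1)\subseteq\St_\Gamma(e)$, so by monotonicity $\St_\Gamma(e)$ dominates $\bar\Gamma$ as soon as $\St_\Gamma(v_1)$ does.

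I do not expect any genuine obstacle here; the entire content is unwinding the definitions of the two stars and of the domination relation. The one point that requires care is bookkeeping the two distinct ambient graphs, namely that the inner star $\St_\Gamma(v)$ is taken in $\Gamma$ while the outer star defining domination is taken in $\bar\Gamma$; getting this right is what makes the one-vertex dichotomy do all the work.
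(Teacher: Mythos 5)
Your proof is correct and follows essentially the same route as the paper: the key identity $V(\Gamma)=\St_\Gamma(v)\cup\St_{\bar\Gamma}(v)\subset\St_{\bar\Gamma}(\St_\Gamma(v))$ for the vertex case, and monotonicity via $\St_\Gamma(v_1)\subset\St_\Gamma(e)$ for the edge case. You merely spell out the two containments (a) and (b) that the paper leaves implicit.
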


\begin{proof}
For a vertex $v$,
each $u\in V(\Gamma)\setminus\{v\}$ is adjacent to $v$ either in $\Gamma$ or
in $\bar\Gamma$, hence
$V(\Gamma)=\St_\Gamma(v)\cup \St_{\bar\Gamma}(v)
\subset \St_{\bar\Gamma}(\St_\Gamma(v))$.
This means that $\St_\Gamma(v)$ dominates $\bar\Gamma$.
For an edge $e=\{v_1,v_2\}$,
$\St_\Gamma(e)$ dominates $\bar\Gamma$
because $\St_\Gamma(v_1)\subset\St_\Gamma(e)$
and $\St_\Gamma(v_1)$ dominates $\bar\Gamma$.
\end{proof}

\begin{lemma}\label{lem:diam3}
Let $\Gamma$ be a biconnected graph.
If either\/ $\diam(\Gamma)\ge 3$ or $\diam(\bar\Gamma)\ge 3$,
then there exists an edge $e$ in $\Gamma$ such that
$\Gamma[\St_{\Gamma}(e)]$ is biconnected.
\end{lemma}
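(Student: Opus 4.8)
The plan is to reduce the assertion to a single connectivity statement and then exhibit the desired edge explicitly in each of the two cases. The key preliminary observation is that for \emph{every} edge $e=\{v_1,v_2\}$ of $\Gamma$ the induced subgraph $\Gamma[\St_\Gamma(e)]$ is automatically connected: applying Lemma~\ref{lem:conn} with $A=\{v_1,v_2\}$ and $B=\St_\Gamma(e)$ works, since $A\subset B=\St_\Gamma(A)$ and $\Gamma[A]$ is the single edge $e$, hence connected. Because $\overline{\Gamma[\St_\Gamma(e)]}=\bar\Gamma[\St_\Gamma(e)]$, it therefore suffices to produce an edge $e$ of $\Gamma$ for which $\bar\Gamma[\St_\Gamma(e)]$ is connected; such an $e$ makes $\Gamma[\St_\Gamma(e)]$ biconnected. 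I would now split according to which diameter hypothesis holds.

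If $\diam(\bar\Gamma)\ge 3$, the edge is immediate. Choose $a,b$ with $d_{\bar\Gamma}(a,b)\ge 3$; then $a,b$ are nonadjacent in $\bar\Gamma$, hence adjacent in $\Gamma$, so $e=\{a,b\}\in E(\Gamma)$. Since $d_{\bar\Gamma}(a,b)\ge 3$, the vertices $a$ and $b$ have no common neighbor in $\bar\Gamma$, which says exactly that every vertex of $\Gamma$ is adjacent in $\Gamma$ to $a$ or to $b$; thus $\St_\Gamma(e)=V(\Gamma)$ and $\bar\Gamma[\St_\Gamma(e)]=\bar\Gamma$ is connected.

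The case $\diam(\Gamma)\ge 3$ is where the real difficulty lies, because here the natural dominating edge lives in $\bar\Gamma$ rather than in $\Gamma$, so one cannot force $\St_\Gamma(e)=V(\Gamma)$. My idea is that it is enough for $\St_\Gamma(e)$ to contain two vertices that are adjacent in $\bar\Gamma$ and together dominate $\bar\Gamma$. Concretely, I would pick a pair $a,b$ at \emph{exact} distance $d_\Gamma(a,b)=3$ (take a geodesic of length at least $3$ and keep its first and fourth vertices) together with a geodesic $a - w_1 - w_2 - b$, and set $e=\{w_1,w_2\}\in E(\Gamma)$. Then $a\in\St_\Gamma(w_1)$ and $b\in\St_\Gamma(w_2)$, so $a,b\in\St_\Gamma(e)$; moreover $a,b$ are nonadjacent in $\Gamma$, hence adjacent in $\bar\Gamma$, and no vertex is adjacent in $\Gamma$ to both of them (such a vertex would force $d_\Gamma(a,b)\le 2$), so $\{a,b\}$ dominates $\bar\Gamma$.

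To conclude I would apply Lemma~\ref{lem:conn} to the graph $\bar\Gamma$ with $A=\{a,b\}$ and $B=\St_\Gamma(e)$: here $\bar\Gamma[A]$ is a single edge and so connected, $A\subset B$ since $a,b\in\St_\Gamma(e)$, and $B\subset\St_{\bar\Gamma}(A)=V(\Gamma)$ because $\{a,b\}$ dominates $\bar\Gamma$; the lemma then gives that $\bar\Gamma[\St_\Gamma(e)]$ is connected. The delicate point, and the step I expect to require the most care, is the choice in this second case: one must secure a pair at $\Gamma$-distance \emph{exactly} $3$ so that its endpoints are simultaneously coverable by the two endpoints of one $\Gamma$-edge yet remain nonadjacent and dominating in $\bar\Gamma$. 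Everything else is a routine invocation of Lemma~\ref{lem:conn}, in the same spirit as Lemma~\ref{lem:domi}.
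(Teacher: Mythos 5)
Your proof is correct and follows essentially the same route as the paper: the same reduction to connectivity of $\bar\Gamma[\St_\Gamma(e)]$, the same dominating-pair observation in both cases, the same choice of the middle edge of a length-$3$ geodesic, and the same appeal to Lemma~\ref{lem:conn}. The only (harmless) difference is that in the second case you seed Lemma~\ref{lem:conn} with the two endpoints $\{a,b\}$ alone, whereas the paper uses all four vertices of the geodesic and notes that they induce a $P_4$ in $\bar\Gamma$; your version is marginally leaner but substantively identical.
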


\begin{proof}
Since $\Gamma[\St_\Gamma(e)]$ is connected for any edge $e$,
it suffices to show that $\bar\Gamma[\St_\Gamma(e)]$ is connected
for some edge $e$ of $\Gamma$.
We use the following claim.

\medskip\noindent
\textit{Claim.}\ \
Let $v_1,v_2\in V(\Gamma)$.
\begin{itemize}
\item[(i)]
If\/ $d_{\bar\Gamma}(v_1,v_2)\ge 3$,
then $\{v_1,v_2\}$ dominates $\Gamma$,
i.e.\  $\St_\Gamma(\{v_1,v_2\})=V(\Gamma)$.
\item[(ii)]
If\/ $d_\Gamma(v_1,v_2)\ge 3$,
then $\{v_1,v_2\}$ dominates $\bar\Gamma$,
i.e.\ $\St_{\bar\Gamma}(\{v_1,v_2\})=V(\Gamma)$.
\end{itemize}
%\smallskip

\begin{proof}[Proof of Claim]
Since (i) and (ii) are equivalent, we prove only (i).
Suppose $d_{\bar\Gamma}(v_1,v_2)\ge 3$.
For each $v\in V(\Gamma)$, either $d_{\bar\Gamma}(v,v_1)\ge 2$
or $d_{\bar\Gamma}(v,v_2)\ge 2$ because otherwise
$d_{\bar\Gamma}(v_1,v_2)
\le d_{\bar\Gamma}(v_1,v)+ d_{\bar\Gamma}(v,v_2)\le 1+1=2$.
Therefore either $\{v,v_1\}\in E(\Gamma)$ or
$\{v,v_2\}\in E(\Gamma)$.
This means that $\St_\Gamma(\{v_1,v_2\})=V(\Gamma)$.
\end{proof}

We first assume $\diam(\bar\Gamma)\ge 3$.
Choose $v_1,v_2\in V(\Gamma)$
such that $d_{\bar\Gamma}(v_1,v_2)\ge 3$,
and let  $e=\{v_1,v_2\}$.
By the above claim, $\St_\Gamma(e)=V(\Gamma)$,
hence $\bar\Gamma[\St_\Gamma(e)]= \bar\Gamma$ and it is connected.

We now assume $\diam(\Gamma)\ge 3$.
Choose $v_1,v_4\in V(\Gamma)$ such that $d_\Gamma(v_1,v_4)=3$.
Choose a path $(v_1,v_2,v_3,v_4)$ from $v_1$ to $v_4$,
and let $e=\{v_2,v_3\}$ and $A=\{v_1,v_2,v_3,v_4\}$.
Then $\Gamma[A]$ is the path graph $P_4(v_1,v_2,v_3,v_4)$,
hence $\bar\Gamma[A]$ is the path graph $P_4(v_2,v_4,v_1,v_3)$,
which is connected.

Notice that $\{ v_1, v_4\}\subset A\subset\St_\Gamma(e)$.
Since $d_\Gamma(v_1,v_4)\ge 3$, $\{v_1,v_4\}$ dominates $\bar\Gamma$
by the above claim,
hence $A$ also dominates $\bar\Gamma$.
Therefore
$$
A\subset\St_\Gamma(e)\subset  V(\Gamma) =\St_{\bar\Gamma}(A).
$$
Since $\bar\Gamma[A]$ is connected,
$\bar\Gamma[\St_\Gamma(e)]$ is also connected
(by Lemma~\ref{lem:conn}).
\end{proof}

Using the above lemmas, we establish the following theorem.

\begin{theorem}\label{thm:2v}
For any nontrivial biconnected graph $\Gamma$,
there exists an edge $e$ of\/ $\Gamma$ such that
$\Gamma[\St_\Gamma(e)]$ is biconnected.
\end{theorem}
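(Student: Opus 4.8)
The plan is to split according to the diameters of $\Gamma$ and $\bar\Gamma$. Since $\Gamma$ is nontrivial and biconnected, neither $\Gamma$ nor $\bar\Gamma$ can have diameter $1$: a diameter-$1$ graph is complete, so its complement is edgeless and hence disconnected (recall $|V(\Gamma)|\ge 4$). Thus $\diam(\Gamma)\ge 2$ and $\diam(\bar\Gamma)\ge 2$. If either $\diam(\Gamma)\ge 3$ or $\diam(\bar\Gamma)\ge 3$, then Lemma~\ref{lem:diam3} already produces an edge $e$ with $\Gamma[\St_\Gamma(e)]$ biconnected and we are done. So the whole difficulty is concentrated in the remaining case $\diam(\Gamma)=\diam(\bar\Gamma)=2$, which I expect to be the main obstacle.

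In that case I would first record two structural facts. By the contrapositive of Lemma~\ref{lem:cut} applied both to $\Gamma$ and to $\bar\Gamma$ (both biconnected), neither $\Gamma$ nor $\bar\Gamma$ has a cut vertex. Next I would reformulate the goal: for any edge $e$ the graph $\Gamma[\St_\Gamma(e)]$ is automatically connected, so it suffices to make $\bar\Gamma[\St_\Gamma(e)]$ connected. For this I would look for a subset $A\subseteq\St_\Gamma(e)$ with $\bar\Gamma[A]$ connected and $A$ dominating $\bar\Gamma$; then $A\subseteq\St_\Gamma(e)\subseteq V(\Gamma)=\St_{\bar\Gamma}(A)$, so applying Lemma~\ref{lem:conn} to $\bar\Gamma$ (with $B=\St_\Gamma(e)$) forces $\bar\Gamma[\St_\Gamma(e)]$ connected. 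Note that Lemma~\ref{lem:domi} already tells us $\St_\Gamma(e)$ itself dominates $\bar\Gamma$, but this is not directly usable, since I need a strictly smaller, $\bar\Gamma$-connected dominating set inside $\St_\Gamma(e)$.

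To produce such an $A$ I would invoke the classical fact that if $\Gamma$ and $\bar\Gamma$ are both connected then $\Gamma$ is not a cograph and hence contains an induced $P_4(a,b,c,d)$. Taking $e=\{b,c\}$ and $A=\{a,b,c,d\}$, we get $A\subseteq\St_\Gamma(e)$ (since $a$ is adjacent to $b$ and $d$ to $c$), while $\bar\Gamma[A]$ is again a $P_4$ and hence connected. So the only thing left to check is that $A$ dominates $\bar\Gamma$, i.e.\ that no vertex is $\Gamma$-adjacent to all of $a,b,c,d$; if that holds, we are done with this edge.

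The hard part will be precisely these common neighbors. If some vertex $x$ is $\Gamma$-adjacent to all of $a,b,c,d$, then $x\in\St_\Gamma(e)$ but $x$ is $\bar\Gamma$-isolated from $A$, so $A$ fails to dominate $\bar\Gamma$ and the clean argument breaks. My plan to handle this is to attach each such $x$ to the $\bar\Gamma[\St_\Gamma(e)]$-component of $A$ directly: since $x\sim_\Gamma a$ and $\diam(\bar\Gamma)=2$ we have $d_{\bar\Gamma}(x,a)=2$, so $x$ and $a$ share a common $\bar\Gamma$-neighbor $y$, and it would suffice to show that such a $y$ can be chosen inside $\St_\Gamma(e)$ and linked to $A$, placing $x$ in the same component as $A$. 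Small examples such as the self-complementary $C_5$ (where every edge-star induces a $P_4$, so no bad $x$ arises) suggest that in the diameter-$2$ case one can in fact always select the induced $P_4$ with no common neighbor at all, so that $A$ dominates $\bar\Gamma$ outright; proving this — ruling out that every induced $P_4$ is dominated by a single vertex while both $\diam(\Gamma)$ and $\diam(\bar\Gamma)$ stay equal to $2$, for which I would lean on the no-cut-vertex property recorded above — is where the essential work lies.
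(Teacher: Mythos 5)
Your reduction is fine as far as it goes: Lemma~\ref{lem:diam3} does dispose of the cases $\diam(\Gamma)\ge 3$ or $\diam(\bar\Gamma)\ge 3$, and your general mechanism (exhibit $A\subseteq\St_\Gamma(e)$ with $\bar\Gamma[A]$ connected and $A$ dominating $\bar\Gamma$, then apply Lemma~\ref{lem:conn}) is exactly the mechanism the paper uses. But the entire content of the theorem is concentrated in the step you leave open, and the specific way you hope to close it does not work. Your main hope --- that when $\diam(\Gamma)=\diam(\bar\Gamma)=2$ one can always choose an induced $P_4(a,b,c,d)$ with no external vertex $\Gamma$-adjacent to all four of $a,b,c,d$ --- is false. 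Take $\Gamma$ to be the line graph of $K_5$ (equivalently, the complement of the Petersen graph): the vertices are the $2$-element subsets of $\{1,\dots,5\}$, two of them adjacent exactly when they intersect. Then $\Gamma$ and $\bar\Gamma$ both have diameter $2$, and a short case check (using the transitive $S_5$-action) shows that \emph{every} induced $P_4$ has an external common neighbour; e.g.\ for the induced path $\{1,2\}$--$\{1,3\}$--$\{3,4\}$--$\{4,5\}$ the vertex $\{1,4\}$ meets all four. So for this graph $A=V(P_4)$ never dominates $\bar\Gamma$, and your ``clean argument'' always breaks. Your fallback --- reattaching each bad vertex $x$ to $A$ inside $\bar\Gamma[\St_\Gamma(e)]$ via an intermediate vertex $y$ --- is not carried out at all (you would need $y\in\St_\Gamma(e)$ \emph{and} $y$ joined to $A$ in $\bar\Gamma[\St_\Gamma(e)]$, neither of which is automatic), and you yourself flag this as ``where the essential work lies.'' As written, the proof is therefore incomplete in the only case that is not already covered by the quoted lemmas.

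For comparison, the paper avoids this difficulty entirely by inducting on $|V(\Gamma)|$: delete a vertex $v_0$; if $\Gamma\setminus v_0$ or its complement is disconnected, Lemma~\ref{lem:cut} forces diameter at least $3$ somewhere and Lemma~\ref{lem:diam3} applies; otherwise the inductive hypothesis gives an edge $e_0$ with $\Gamma_0[\St_{\Gamma_0}(e_0)]$ biconnected, and one analyzes whether $v_0$ dominates $\St_{\Gamma_0}(e_0)$. In the nontrivial case a \emph{new} edge $e=\{v_0,v_4\}$ is chosen, and the connected dominating set $A$ is taken to be $\St_{\Gamma_0}(e_0)\cup\{v_0,v_3,v_4\}$ --- a much larger set than a single $P_4$, which is what makes the domination and connectivity checks go through. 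If you want to salvage your approach, you would need either a proof of the reattachment step in full generality or a different choice of $A$; the inductive scheme is the cleaner route.
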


Before proving the theorem, let us see examples.

\begin{example}\label{ex:ex3}
Let $\Gamma$ be the path graph $P_6$ in Figure~\ref{fig:ex3}(a).
If $e=\{v_1,v_2\}$, then $\Gamma[\St_{\Gamma}(e)]$ is the path graph
$P_3(v_1,v_2,v_3)$, which is not biconnected.
If $e=\{v_2,v_3\}$, then $\Gamma[\St_{\Gamma}(e)]$ is the path graph
$P_4(v_1,v_2,v_3,v_4)$, which is biconnected.
It is easy to see that $\Gamma[\St_{\Gamma}(e)]$ is biconnected
if and only if $e=\{v_i,v_{i+1}\}$ for $2\le i\le 4$.

Let $\Gamma$ be the graph with 7 vertices in Figure~\ref{fig:ex3}(b).
If $e=\{v_3,v_4\}$,
then $\Gamma[\St_{\Gamma}(e)]$
is the join $\{v_0\}*P_4(v_2,v_3,v_4,v_5)$,
which is not biconnected.
If $e=\{v_0,v_6\}$,
then $\Gamma[\St_{\Gamma}(e)]$
is the join $\{v_0\}*P_5(v_2,v_3,v_4,v_5,v_6)$,
which is not biconnected.
It is easy to see that
$\Gamma[\St_{\Gamma}(e)]$ is biconnected
if and only if $e$ is $\{v_0,v_2\}$ or $\{v_2,v_3\}$.

Let $\Gamma$ be the pentagon $C_5$ in Figure~\ref{fig:ex3}(c).
Then $\Gamma[\St_\Gamma(e)]$ is biconnected for any edge $e$.
\end{example}

\begin{figure}
\begin{tabular}{ccccc}
\begin{xy}/r.8mm/:
(0,5)="or"="a"="a1" *+!U{v_1} *{\bullet},
%"a"+(10,0)="a"="a1" *+!U{v_1} *{\bullet},
"a"+(10,0)="a"="a2" *+!U{v_2} *{\bullet},
"a"+(10,0)="a"="a3" *+!U{v_3} *{\bullet},
"a"+(10,0)="a"="a4" *+!U{v_4} *{\bullet},
"a"+(10,0)="a"="a5" *+!U{v_5} *{\bullet},
"a"+(10,0)="a"="a6" *+!U{v_6} *{\bullet},
"a1"; "a" **@{-},
\end{xy}
&& %%%% --------------------------------------------
\begin{xy}/r.8mm/:
(0,5)="or"="a"="a1" *+!U{v_1} *{\bullet},
%"a"+(10,0)="a"="a1" *+!U{v_1} *{\bullet},
"a"+(10,0)="a"="a2" *+!U{v_2} *{\bullet},
"a"+(10,0)="a"="a3" *+!U{v_3} *{\bullet},
"a"+(10,0)="a"="a4" *+!U{v_4} *{\bullet},
"a"+(10,0)="a"="a5" *+!U{v_5} *{\bullet},
"a"+(10,0)="a"="a6" *+!U{v_6} *{\bullet},
"a1"; "a" **@{-},
"or"+(25, 27)="v0" *+!D{v_0} *{\bullet}, "v0";
%"a1" **@{-},
"a2" **@{-},
"a3" **@{-},
"a4" **@{-},
"a5" **@{-},
"a6" **@{-},
\end{xy} &&
\begin{xy}/r1.2mm/:
(0,13)="or" ,
"or"+( 0, 10)="v1" *{\bullet} *+!D{v_1},
"or"+(-9.51,  3.09)="v2" *{\bullet} *+!R{v_2},
"or"+(-5.88, -8.09)="v3" *{\bullet} *+!U{v_3},
"or"+( 5.88, -8.09)="v4" *{\bullet} *+!U{v_4},
"or"+( 9.51,  3.09)="v5" *{\bullet} *+!L{v_5},
"v1";
"v2" **@{-};
"v3" **@{-};
"v4" **@{-};
"v5" **@{-};
"v1" **@{-};
\end{xy}\\
(a) Path graph $P_6$  &&
(b) A graph with 7 vertices &&
(c) Pentagon $C_5$
\end{tabular}
\caption{Graphs for Example~\ref{ex:ex3}}\label{fig:ex3}
\end{figure}
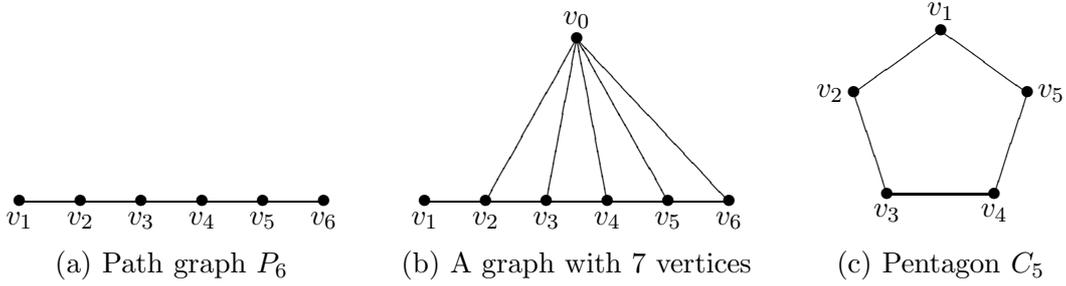

Figures~\ref{fig:c1} and \ref{fig:c21} are
for the proof of Theorem~\ref{thm:2v},
where the solid (resp.\ dotted) lines between two vertices
indicate edges (resp.\ non-edges) in $\Gamma$.

\begin{proof}[Proof of Theorem~\ref{thm:2v}]
Since $\Gamma[\St_\Gamma(e)]$ is connected for any edge $e$,
it suffices to show that $\bar\Gamma[\St_\Gamma(e)]$ is connected
for some edge $e$.

We use an induction on $|V(\Gamma)|$.

If $|V(\Gamma)|=4$, then $\Gamma$ must be the path graph $P_4$,
hence we are done (by Lemma~\ref{lem:diam3}).

We now assume $|V(\Gamma)|\ge 5$.
Choose a vertex $v_0$, and let $\Gamma_0=\Gamma\setminus v_0$.

By Lemma~\ref{lem:cut},
if $\Gamma_0$ is disconnected then $\diam(\Gamma)\ge 3$,
and if $\overline{\Gamma_0}$ is disconnected then $\diam(\bar\Gamma)\ge 3$.
In either case, we are done (by Lemma~\ref{lem:diam3}).
Therefore we may assume
that $\Gamma_0$ is biconnected.

By induction hypothesis, there exists an edge $e_0=\{v_1,v_2\}$
of $\Gamma_0$ such that
$\Gamma_0[\St_{\Gamma_0}(e_0)]$ is biconnected.
Notice the following:
\begin{itemize}
\item $\St_\Gamma(e_0)$ is either $\St_{\Gamma_0}(e_0)$ or
$\St_{\Gamma_0}(e_0)\cup\{v_0\}$;

\item $\bar\Gamma[\St_{\Gamma_0}(e_0)]
=\overline{\Gamma_0}[\St_{\Gamma_0}(e_0)]
=\overline{\Gamma_0[\St_{\Gamma_0}(e_0)]}$;

\item $\overline{\Gamma_0}[\St_{\Gamma_0}(e_0)]$
is connected because $\Gamma_0[\St_{\Gamma_0}(e_0)]$ is biconnected.
\end{itemize}

\par\medskip %\noindent
\textit{Case 1.} $v_0$ does not dominate $\St_{\Gamma_0}(e_0)$ in $\Gamma$.
\par\smallskip

We will show that in this case $\bar\Gamma[\St_\Gamma(e_0)]$ is connected.
Then we are done by taking $e=e_0$.

If $v_0$ is adjacent to neither $v_1$ nor $v_2$ in $\Gamma$,
then $\St_\Gamma(e_0)=\St_{\Gamma_0}(e_0)$,
hence
$$
\bar\Gamma[\St_\Gamma(e_0)]
=\bar\Gamma[\St_{\Gamma_0}(e_0)]
=\overline{\Gamma_0}[\St_{\Gamma_0}(e_0)].$$
Since $\overline{\Gamma_0}[\St_{\Gamma_0}(e_0)]$ is connected,
we are done.

If $v_0$ is adjacent to either $v_1$ or $v_2$ in $\Gamma$,
then
$\St_\Gamma(e_0)=\St_{\Gamma_0}(e_0)\cup \{v_0\}$.
Since $v_0$ does not dominate $\St_{\Gamma_0}(e_0)$ in $\Gamma$,
there is a vertex $v_3\in\St_{\Gamma_0}(e_0)$
that is not adjacent to $v_0$ in $\Gamma$
(hence $v_0$ is adjacent to $v_3$ in $\bar\Gamma$).
See Figure~\ref{fig:c1}.
Since $\bar\Gamma[\St_{\Gamma}(e_0)]$ is obtained from
$\overline{\Gamma_0}[\St_{\Gamma_0}(e_0)]$ by adding a vertex $v_0$
and adding at least one edge $\{v_0,v_3\}$,
the graph $\bar\Gamma[\St_{\Gamma}(e_0)]$ is connected,
hence we are done.

\begin{figure}
$$
\begin{xy}/r1.5mm/:
(0,0)="or" *{\bullet}, *+!U{v_0},
"or";
"or"+(27, 2)="v1" *{\bullet} *+!R{v_1},
"or"+(25,-2)="v2" *{\bullet} *+!R{v_2},
"or"+(23,-5)="v3" **@{.} *{\bullet} *+!L{v_3},
"or"+(28, -2) *\xycircle(12,7){-},
%%"or"+(42,-6) *!L{\Gamma_0[\St_{\Gamma_0}(v_1,v_2)]},
"v1"; "v2" **@{-} ?(.5) *+!L{e=e_0},
"or"+(40,2) *!L{\St_{\Gamma_0}(e_0)},
%% "or"+(40,-6) *!L{\St_{\Gamma_0}(e_0)},
\end{xy}
$$
\caption{Figure for Case 1 in the proof of Theorem~\ref{thm:2v}}
\label{fig:c1}
\end{figure}

\par\medskip %\noindent
\textit{Case 2.} $v_0$ dominates $\St_{\Gamma_0}(e_0)$ in $\Gamma$.
\par\smallskip

Notice that $\St_{\Gamma}(e_0) = \St_{\Gamma_0}(e_0)\cup\{v_0\}
\subset \St_\Gamma(v_0)$.

There exists a vertex $v_3 \in V(\Gamma)\backslash \St_{\Gamma_0}(e_0)$ such that $d_\Gamma(v_0,v_3)=2$
because otherwise $d_\Gamma(v_0,v)\le 1$ for all $v\in V(\Gamma)$,
hence $\Gamma=\{v_0\}*\Gamma_0$,
which contradicts that $\bar\Gamma$ is connected.
Choose $v_4\in V(\Gamma)$ such that
$(v_0,v_4,v_3)$ is a path in $\Gamma$ from $v_0$ to $v_3$.
See Figure~\ref{fig:c21},
where $v_4\in\St_{\Gamma_0}(e_0)$ in the left
and $v_4\not\in\St_{\Gamma_0}(e_0)$ in the right.

Let $e=\{v_0,v_4\}$.
We will show that $\bar\Gamma[\St_\Gamma(e)]$ is connected.

Let $A=\St_{\Gamma_0}(e_0)\cup\{v_0,v_3,v_4\}$,
and observe the following.
\begin{itemize}
\item
$A$ dominates $\bar\Gamma$ because
$\St_{\Gamma}(e_0)\subset A$
and
$\St_{\Gamma}(e_0)$ dominates $\bar\Gamma$ (by Lemma~\ref{lem:domi}).
Therefore
$$A\subset \St_\Gamma(e)\subset V(\Gamma) =\St_{\bar\Gamma}(A).$$

\item
Since $\overline{\Gamma_0}[\St_{\Gamma_0}(e_0)]$ is connected
and since $\{v_1,v_3\}$ and $\{v_3,v_0\}$ are edges of $\bar\Gamma$,
the graph $\bar\Gamma[\St_{\Gamma_0}(e_0)\cup\{v_0,v_3\}]$ is connected.
Since either $v_4$ is contained in $\St_{\Gamma_0}(e_0)$
(as in Figure~\ref{fig:c21}(a))
or $\{v_1,v_4\}$ is an edge of $\bar\Gamma$ (as in Figure~\ref{fig:c21}(b)),
the graph $\bar\Gamma[\St_{\Gamma_0}(e_0)\cup\{v_0,v_3,v_4\}]
=\bar\Gamma[A]$ is connected.
\end{itemize}
Since $A\subset \St_\Gamma(e)\subset\St_{\bar\Gamma}(A)$
and $\bar\Gamma[A]$ is connected,
$\bar\Gamma[\St_\Gamma(e)]$ is connected
(by Lemma~\ref{lem:conn}).
\end{proof}

\begin{figure}
$$\begin{array}{ccc}
\begin{xy}/r1.8mm/:
(0,0)="or" *{\bullet}, *+!U{v_0},
"or";
"or"+(17, 3)="v4" **@{-} *{\bullet} *+!L{v_4},
"or"+(26,-2)="v2" **@{-} *{\bullet} *+!U{v_2},
"or"+(27, 2)="v1" **@{-} *{\bullet} *+!D{v_1},
"or"+(15,10)="v3" *{\bullet} *+!D{v_3},
"v1"; "v2" **@{-} ?(.5) *+!L{e_0},
"v4"; "v3" **@{-}, "or" **@{-} ?(.5) *+!D{e},
"v3"; "v1" **@{.}, "v2" **@{.}, "or" **@{.},
"or"+(23,1) *\xycircle(10,6){-},
"or"+(32,7) *+{\St_{\Gamma_0}(e_0)},
\end{xy}
& &
\begin{xy}/r1.8mm/:
(0,0)="or" *{\bullet}, *+!U{v_0},
"or";
"or"+(27, 2)="v1" **@{-} *{\bullet} *+!D{v_1},
"or"+(26,-2)="v2" **@{-} *{\bullet} *+!U{v_2},
"or"+(13,3)="v4" *{\bullet} *+!DR{v_4},
"or"+(15,10)="v3" *{\bullet} *+!D{v_3},
"v1"; "v2" **@{-} ?(.5) *+!L{e_0},
"v4"; "v1" **@{.}, "v2" **@{.},"v3" **@{-}, "or" **@{-} ?(.4) *+!D{e},
"v3"; "v1" **@{.}, "v2" **@{.}, "or" **@{.},
"or"+(25,1) *\xycircle(10,6){-},
"or"+(34,7) *+{\St_{\Gamma_0}(e_0)},
%% "or"+(25,-7) *+{\St_{\Gamma_0}(e_0)},
\end{xy}\\[2em]
\mbox{(a)\ \ $v_4\in \St_{\Gamma_0}(e_0)$}
&&
\mbox{(b)\ \ $v_4\not\in \St_{\Gamma_0}(e_0)$}
\end{array}
$$
\caption{Figures for Case 2 in the proof of Theorem~\ref{thm:2v}}
\label{fig:c21}
\end{figure}

The following proposition will be used in the proof of Theorem~\ref{thm:V_TL}.

\begin{proposition}\label{prop:22}
For each $n\ge 3$, there exists a biconnected graph $\Lambda_n$
satisfying the following:
\begin{itemize}
\item[(i)] $\diam(\Lambda_n)=\diam(\overline{\Lambda_n})=2$ and $|V(\Lambda_n)|=n+2$;
\item[(ii)]  $\Lambda_n$ contains a path graph $P_n$ as an induced subgraph
in a way that $V(P_n)$ dominates both $\Lambda_n$ and $\overline{\Lambda_n}$.
\end{itemize}
\end{proposition}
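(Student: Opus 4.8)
The plan is to prove the proposition constructively, exhibiting an explicit family rather than arguing abstractly. For $n\ge 4$ I would take $V(\Lambda_n)=\{v_1,\dots,v_n,a,b\}$ and declare the edges to be: the path edges $\{v_i,v_{i+1}\}$ for $1\le i\le n-1$; every edge $\{a,v_i\}$ with $2\le i\le n$; the edges $\{b,v_1\}$ together with $\{b,v_i\}$ for $4\le i\le n$; and no edge between $a$ and $b$. Thus $a$ is adjacent to all path vertices except $v_1$, while $b$ is adjacent to $v_1$ and to $v_4,\dots,v_n$ but misses $v_2,v_3$. For the smallest case $n=3$ the required graph is already at hand: the pentagon $C_5$ of Example~\ref{ex:ex3}(c) is biconnected and self-complementary with $\diam(C_5)=\diam(\overline{C_5})=2$, has five vertices, and contains an induced $P_3$ whose three vertices dominate both $C_5$ and $\overline{C_5}$.

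Several requirements are then immediate from the construction. The subgraph induced on $P:=\{v_1,\dots,v_n\}$ has exactly the path edges, so it is $P_n$, and $|V(\Lambda_n)|=n+2$. The set $P$ dominates $\Lambda_n$ because $a$ is adjacent to $v_2$ and $b$ to $v_1$, and $P$ dominates $\overline{\Lambda_n}$ because $a$ is \emph{non}-adjacent to $v_1$ and $b$ to $v_2$; in other words each of $a,b$ has both a neighbour and a non-neighbour in $P$, which is exactly $\St_{\overline{\Lambda_n}}(P)=V(\Lambda_n)$. To get $\diam(\Lambda_n)=2$ I would check that every non-adjacent pair has a common neighbour: two path vertices at path-distance $2$ share the vertex between them; if they are farther apart and neither is $v_1$ they share $a$; a pair $\{v_1,v_j\}$ with $j\ge 4$ shares $b$ since $\{v_1,v_j\}\subset\St_{\Lambda_n}(b)$; and the few remaining pairs $\{a,v_1\},\{b,v_2\},\{b,v_3\},\{a,b\}$ each have an obvious common neighbour among $v_1,v_2,v_4$. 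As $\Lambda_n$ is not complete, this gives $\diam(\Lambda_n)=2$.

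The main obstacle is verifying $\diam(\overline{\Lambda_n})=2$, and it is precisely this requirement that forces the particular neighbourhoods of $a$ and $b$. In $\overline{\Lambda_n}$ two path vertices are adjacent exactly when their indices differ by at least $2$, whereas $a$ is adjacent only to $v_1$ and to $b$, and $b$ only to $v_2,v_3$ and to $a$; so $a$ and $b$ have very low degree, and the real danger is that some vertex sits at complement-distance $3$ from one of them. I would settle the common-neighbour condition pair by pair. For the pairs involving the low-degree vertices: $a$ reaches every $v_i$ with $i\ge 3$ through $v_1$ and reaches $v_2$ through $b$ — this last route is exactly why I insist that $a\not\sim b$ — while $b$ reaches every $v_i$ with $i\ge 4$ through $v_2$ and reaches $v_1$ through $v_3$. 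The genuinely delicate pairs are the consecutive path pairs $\{v_i,v_{i+1}\}$, which are non-adjacent in $\overline{\Lambda_n}$: here $v_{i-2}$ is a common neighbour when $i\ge 3$, $v_4$ is one when $i=1$ (this is where $n\ge 4$ is used), and the single pair $\{v_2,v_3\}$ is rescued by $b$, which is adjacent to both of them in $\overline{\Lambda_n}$.

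Since $\overline{\Lambda_n}$ is not complete, the pairwise checks give $\diam(\overline{\Lambda_n})=2$; and because both $\Lambda_n$ and its complement now have finite diameter, each is connected, so $\Lambda_n$ is biconnected. Together with the count $|V(\Lambda_n)|=n+2$ and the two domination statements, this establishes (i) and (ii). I expect the only labour beyond the construction itself to lie in the complement-diameter verification of the previous paragraph, where the interplay between the complement-of-path on $P$ and the two sparse vertices $a,b$ must be tracked carefully; everything else is a direct reading-off from the definition of the edge set.
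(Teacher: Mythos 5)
Your construction is exactly the paper's: for $n=3$ you take the pentagon with an induced $P_3$, and for $n\ge 4$ your vertices $a,b$ play precisely the roles of the paper's $x,y$ (adjacent to $v_2,\dots,v_n$ and to $v_1,v_4,\dots,v_n$ respectively, with $a\not\sim b$). The paper leaves the verification as ``straightforward''; your pair-by-pair check of the two diameters and the two domination claims is correct and fills in exactly those details.
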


\begin{proof}
If $n=3$, let $\Lambda_3$ be the pentagon
such that $P_3(v_1,v_2,v_3)$ is an induced subgraph of $\Lambda_3$
as in the left of Figure~\ref{fig:22}.
It is straightforward to see that
$\Lambda_3$ has the desired properties.

If $n\ge 4$, let $\Lambda_n$ be the graph obtained from $P_n=P_n(v_1,\ldots,v_n)$
by adding two vertices $x$ and $y$
and by adding edges $\{x,v_i\}$ for $i\in\{2,\ldots,n\}$
and $\{y,v_j\}$ for $j \in\{1,4,5,\ldots,n\}$
as in the right of Figure~\ref{fig:22} for the case of $n=7$.
It is straightforward to see that $\Lambda_n$ has the desired properties.
\end{proof}

\begin{figure}
$$
\begin{xy}/r.8mm/:
(0,-10)="or"="a"="a1" *+!U{v_1} *{\bullet},
"a"+(15,0)="a"="a2" *+!U{v_2} *{\bullet},
"a"+(15,0)="a"="a3" *+!U{v_3} *{\bullet},
"a1"; "a" **@{-},
"or"+(0,15)="x" *+!D{x} *{\bullet}, "x"; "a1" **@{-},
"a3"+(0,15)="y" *+!D{y} *{\bullet}, "y"; "a3" **@{-}, "x" **@{-},
\end{xy}
\qquad\qquad
\begin{xy}/r.8mm/:
(0,0)="or"="a"="a1" *+!U{v_1} *{\bullet},
"a"+(10,0)="a"="a2" *+!U{v_2} *{\bullet},
"a"+(10,0)="a"="a3" *+!U{v_3} *{\bullet},
"a"+(10,0)="a"="a4" *+!U{v_4} *{\bullet},
"a"+(10,0)="a"="a5" *+!U{v_5} *{\bullet},
"a"+(10,0)="a"="a6" *+!U{v_6} *{\bullet},
"a"+(10,0)="a"="a7" *+!U{v_7} *{\bullet},
"a1"; "a" **@{-},
"or"+(15, 15)="x" *+!D{x} *{\bullet}, "x";
"a2" **@{-},
"a3" **@{-},
"a4" **@{-},
"a5" **@{-},
"a6" **@{-},
"a7" **@{-},
"or"+(15,-20)="y" *+!U{y} *{\bullet}, "y";
"a1" **@{-},
%"a3" **@{-},
"a4" **@{-},
"a5" **@{-},
"a6" **@{-},
"a7" **@{-},
\end{xy}
$$
\caption{Figures for Proposition~\ref{prop:22}}\label{fig:22}
\end{figure}
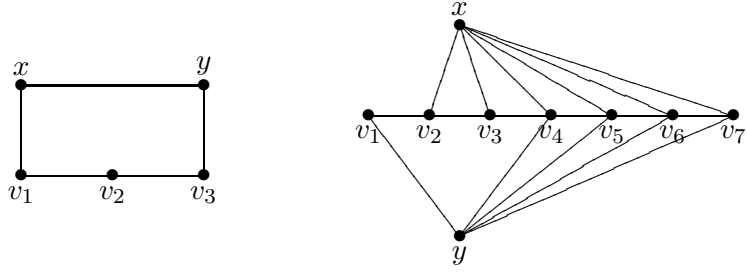

\section{Minimal asymptotic translation lengths}

In this section, we establish our results on
the minimal asymptotic translation lengths
of right-angled Artin groups on extension graphs.
Let us abbreviate the translation length
$\tau_{(\Gamma^e,d_e)}(g)$ to $\tau_e(g)$.

For $v\in V(\Gamma)$, let $Z(v)$ denote the \emph{centralizer} of $v$
in $A(\Gamma)$, i.e.\
$Z(v)=\{g\in A(\Gamma): gv=vg~\mbox{in $A(\Gamma)$}\}$.
It is well known that $Z(v)$ is generated by $\St_\Gamma(v)$.

\begin{lemma}\label{lem:tl2}
Let $\{v_1,v_2\} \in E(\Gamma)$ and $g_1, g_2\in A(\Gamma)$ such that
$g_1\in Z(v_1)$ and $g_2\in Z(v_2)$, and let $g=g_1g_2$.
Then $\tau_e(g)\le 2$.
\end{lemma}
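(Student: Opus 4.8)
The plan is to reduce the asymptotic statement to a single displacement estimate and then exhibit an explicit short path in $\Gamma^e$. Recall that $A(\Gamma)$ acts on $\Gamma^e$ on the right by conjugation, with $v^h\cdot g = v^{hg}$, and that this action is by isometries because conjugation by any fixed element of $A(\Gamma)$ preserves the commuting relation and hence is a graph automorphism of $\Gamma^e$. Using that $g$ acts isometrically, for any basepoint $x$ and any $k$ we have $d_e(xg^k, xg^{k+1}) = d_e(x, xg)$, so the triangle inequality gives $d_e(x, xg^n) \le n\, d_e(x,xg)$ for all $n\ge 1$. Consequently $\tau_e(g) = \lim_{n\to\infty} d_e(x,xg^n)/n \le d_e(x,xg)$. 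Thus it suffices to find one vertex $x$ of $\Gamma^e$ with $d_e(x, xg)\le 2$.

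I would take $x = v_2$ (viewed as the vertex $v_2^1$ of $\Gamma^e$) and produce a path of length $2$ from $v_2$ to $v_2\cdot g = v_2^{g_1g_2}$ passing through the intermediate vertex $v_1^{g_2}$. The two required adjacencies both follow from the single fact that $v_1$ and $v_2$ commute (they span an edge of $\Gamma$), combined with the automorphism property of conjugation. For the first edge, since $g_2\in Z(v_2)$ we have $v_2 = v_2^{g_2}$, and conjugating the adjacent pair $\{v_1,v_2\}$ by $g_2$ shows that $v_1^{g_2}$ and $v_2^{g_2}=v_2$ are adjacent in $\Gamma^e$. For the second edge, since $g_1\in Z(v_1)$ we have $v_1^{g_2} = v_1^{g_1g_2}$, and conjugating the adjacent pair $\{v_1,v_2\}$ by $g_1g_2$ shows that $v_1^{g_1g_2}=v_1^{g_2}$ and $v_2^{g_1g_2}=v_2\cdot g$ are adjacent. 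Hence $v_2,\ v_1^{g_2},\ v_2\cdot g$ is a path of length at most $2$, so $d_e(v_2, v_2 g)\le 2$, and the reduction above yields $\tau_e(g)\le 2$.

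The genuinely load-bearing observation---and the step I would be most careful about---is that conjugation by a fixed element is a graph automorphism of $\Gamma^e$, so that ``conjugating an edge keeps it an edge''; this is precisely what lets the hypotheses $g_1\in Z(v_1)$ and $g_2\in Z(v_2)$ collapse both comparisons to the single adjacency $v_1\sim v_2$. Everything else (the subadditivity bound, and the fact that the chosen triple consists of bona fide vertices of $\Gamma^e$, i.e.\ conjugates of vertices of $\Gamma$) is routine. I do not expect any serious obstacle; the only care needed is bookkeeping of the conjugating elements, using $v_i^{g_i}=v_i$ at exactly the right two places to rewrite $v_2$ as $v_2^{g_2}$ and $v_1^{g_2}$ as $v_1^{g_1g_2}$.
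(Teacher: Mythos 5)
Your proposal is correct and follows essentially the same route as the paper: both exhibit an explicit length-$2$ path between a base vertex and its $g$-translate by conjugating the edge $\{v_1,v_2\}$ and using $v_i^{g_i}=v_i$ (indeed your path $v_2,\ v_1^{g_2},\ v_2^{g}$ is exactly a segment of the paper's zigzag $(v_1,v_2,v_1^{g},v_2^{g},\ldots)$, since $v_1^{g}=v_1^{g_1g_2}=v_1^{g_2}$). The only cosmetic difference is that you invoke the general subadditivity bound $\tau_e(g)\le d_e(x,xg)$ rather than concatenating the translated paths explicitly.
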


\begin{proof}
Notice that $\{v_1^h,v_2^h\}\in E(\Gamma^e)$ for
any $h\in A(\Gamma)$.
In particular, $\{v_1^{g_2},v_2^{g_2}\}\in E(\Gamma^e)$.
Since $v_1^{g_2}=(v_1^{g_1})^{g_2}=v_1^{g_1g_2}=v_1^g$
and $v_2^{g_2}=v_2$,
we have $\{v_1^g,v_2\}\in E(\Gamma^e)$.

Now $(v_1,v_2,v_1^g)$ is a path in $\Gamma^e$, hence
$(v_1,v_2,v_1^g,v_2^g,v_1^{g^2},v_2^{g^2},v_1^{g^3},\ldots)$
is also a path in $\Gamma^e$. See Figure ~\ref{fig:len2}.
Therefore $d_e(v_1,v_1^{g^n})\le2n$ for all $n\ge1$,
which implies $\tau_e(g)\le 2$.
\end{proof}

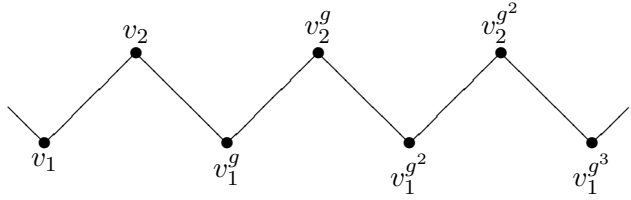
\begin{figure}
$$
\begin{xy}/r1.2mm/:
(10, 10)="hd1",
(10,-10)="hd2",
(0,0)="or",
     "or"="v1" *{\bullet} *+!U{v_1},
"v1"+"hd1"="v2" *{\bullet} *+!D{v_2};   "v1" **@{-},
"v2"+"hd2"="v1" *{\bullet} *+!U{v_1^g}; "v2" **@{-},
"v1"+"hd1"="v2" *{\bullet} *+!D{v_2^g}; "v1" **@{-},
"v2"+"hd2"="v1" *{\bullet} *+!U{v_1^{g^2}}; "v2" **@{-},
"v1"+"hd1"="v2" *{\bullet} *+!D{v_2^{g^2}}; "v1" **@{-},
"v2"+"hd2"="v1" *{\bullet} *+!U{v_1^{g^3}}; "v2" **@{-},
"v1"; "v1"+( 4,4) **@{-},
"or"; "or"+(-4,4) **@{-},
\end{xy}
$$
\caption{Figure for Lemma~\ref{lem:tl2}}\label{fig:len2}
\end{figure}

In the above lemma, the element $g$ may not be loxodromic.
The following lemma gives a sufficient condition
under which a loxodromic element $g$ with $\tau_e(g)\le 2$ exists.

\begin{lemma}\label{lem:len2}
Let $\Gamma$ be a nontrivial biconnected graph.
If there exists an edge $e$ of\/ $\Gamma$
such that $\Gamma[\St_\Gamma(e)]$ is biconnected,
then there exists a loxodromic element $g\in A(\Gamma)$
such that $\tau_e(g)\le 2$.
%($\leftarrow$ Lemma~\ref{lem:tl2})
\end{lemma}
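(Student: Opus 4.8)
The plan is to construct the loxodromic element explicitly as a positive word of the form already treated in Lemma~\ref{lem:tl2}, and then to verify loxodromicity through the combinatorial criterion of Lemma~\ref{lem:loxo}. Write $e=\{v_1,v_2\}$ for the given edge, so that $\Gamma[\St_\Gamma(e)]$ is biconnected, where $\St_\Gamma(e)=\St_\Gamma(v_1)\cup\St_\Gamma(v_2)$. First I would let $g_1$ be the product, in any fixed order, of all vertices in $\St_\Gamma(v_1)$, and $g_2$ the product of all vertices in $\St_\Gamma(v_2)$, and set $g=g_1g_2$. Since the centralizer $Z(v_i)$ is generated by $\St_\Gamma(v_i)$, we have $g_1\in Z(v_1)$ and $g_2\in Z(v_2)$, so Lemma~\ref{lem:tl2} immediately yields $\tau_e(g)\le 2$.

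Next I would record that $g$ is a nonempty positive word and is therefore cyclically reduced, with $\supp(g)=\St_\Gamma(v_1)\cup\St_\Gamma(v_2)=\St_\Gamma(e)$; here I use that a positive word admits no cancellation, so its support is exactly the set of generators that occur in it. Because $\Gamma$ is nontrivial biconnected we have $g\ne 1$, so the hypotheses of Lemma~\ref{lem:loxo} are met, and it remains only to check condition (iii) for the set $\supp(g)=\St_\Gamma(e)$.

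To verify loxodromicity I would assemble two facts. On one hand, $\St_\Gamma(e)$ dominates $\bar\Gamma$ by Lemma~\ref{lem:domi}. On the other hand, $\bar\Gamma[\St_\Gamma(e)]=\overline{\Gamma[\St_\Gamma(e)]}$ is connected precisely because the induced subgraph $\Gamma[\St_\Gamma(e)]$ is biconnected, which is exactly the hypothesis we are handed. These are the two requirements of Lemma~\ref{lem:loxo}(iii), so $g$ is loxodromic; combined with the bound of the first paragraph, $g$ is the desired element.

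I do not expect a serious obstacle here: the real content has already been isolated into Theorem~\ref{thm:2v} (which produces the edge $e$) and into Lemmas~\ref{lem:tl2}, \ref{lem:loxo}, and \ref{lem:domi}. The one point needing a little care is matching the two clauses of Lemma~\ref{lem:loxo}(iii) with the two available facts, keeping the scopes distinct: the domination clause comes from Lemma~\ref{lem:domi} applied to the \emph{whole} complement $\bar\Gamma$ (not merely to $\overline{\Gamma[\St_\Gamma(e)]}$), whereas the connectivity clause comes from the biconnectedness of the induced subgraph $\Gamma[\St_\Gamma(e)]$ through the identity $\bar\Gamma[\St_\Gamma(e)]=\overline{\Gamma[\St_\Gamma(e)]}$. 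Distinguishing the ambient graph from the induced subgraph is the only place to be cautious.
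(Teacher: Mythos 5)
Your proposal is correct and follows essentially the same route as the paper: take $g=g_1g_2$ with $g_i$ the positive product of the vertices of $\St_\Gamma(v_i)$, apply Lemma~\ref{lem:tl2} for the bound $\tau_e(g)\le 2$, and verify loxodromicity via Lemma~\ref{lem:loxo}(iii) using Lemma~\ref{lem:domi} for domination and the biconnectedness hypothesis for connectivity of $\bar\Gamma[\St_\Gamma(e)]$. No gaps.
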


\begin{proof}
Let $e=\{v_1,v_2\}$,
$\St_{\Gamma}(v_1)=\{v_1,x_1,\ldots,x_p\}$
and $\St_{\Gamma}(v_2)=\{v_2,y_1,\ldots,y_q\}$.
Let
$$
g_1=v_1x_1\cdots x_p,\quad
g_2=v_2y_1\cdots y_q,\quad
g=g_1g_2.
$$
Then $g_i\in Z(v_i)$ for $i=1,2$,
hence $\tau_e(g)\le 2$ (by Lemma~\ref{lem:tl2}).
Therefore it suffices to show that $g$ is loxodromic.

Since each $g_i$ is represented
by a positive word on generators,
$g$ is cyclically reduced.
Notice that
$$
\supp(g)=\supp(g_1)\cup\supp(g_2)
=\St_\Gamma(v_1)\cup \St_\Gamma(v_2)
=\St_\Gamma(e).
$$
Since $\St_\Gamma(e)$ dominates $\bar\Gamma$
(by Lemma~\ref{lem:domi})
and $\bar\Gamma[\St_\Gamma(e)]$ is connected
by the hypothesis,
$g$ is loxodromic on $\Gamma^e$ (by Lemma~\ref{lem:loxo}).
\end{proof}

Combining the above lemma with Theorem~\ref{thm:2v} yields the following theorem.

\begin{theorem}\label{thm:tl}
For any nontrivial biconnected graph $\Gamma$,
$$\minTL_{(\Gamma^e, d_e)}(A(\Gamma))\le 2.$$
\end{theorem}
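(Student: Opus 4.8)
The plan is to show that the main theorem follows almost immediately by combining the structural result of Theorem~\ref{thm:2v} with the translation-length estimate of Lemma~\ref{lem:len2}. The entire argument is a one-line deduction, so the real work has already been carried out in the preceding results; what remains is simply to verify that their hypotheses match up and to interpret the conclusion in terms of $\minTL$.

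First I would invoke Theorem~\ref{thm:2v}: since $\Gamma$ is a nontrivial biconnected graph, there exists an edge $e$ of $\Gamma$ such that the induced subgraph $\Gamma[\St_\Gamma(e)]$ is biconnected. This is precisely the hypothesis required by Lemma~\ref{lem:len2}. Applying that lemma (again using that $\Gamma$ is nontrivial and biconnected), I obtain a \emph{loxodromic} element $g\in A(\Gamma)$ with $\tau_e(g)\le 2$. The loxodromy of $g$ is the crucial point: it guarantees $\tau_e(g)>0$, so that $g$ is a legitimate competitor in the minimization defining $\minTL_{(\Gamma^e,d_e)}(A(\Gamma))$.

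To conclude, I would recall the definition
$$
\minTL_{(\Gamma^e,d_e)}(A(\Gamma)) = \min\{\tau_e(h): h\in A(\Gamma),\ \tau_e(h)>0\}.
$$
Since $g$ satisfies $\tau_e(g)>0$ (being loxodromic) and $\tau_e(g)\le 2$, the minimum over all such $h$ is bounded above by $\tau_e(g)\le 2$, which gives exactly
$$
\minTL_{(\Gamma^e,d_e)}(A(\Gamma))\le 2.
$$

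Honestly, there is no genuine obstacle left at this stage, because the two substantive ingredients have been proved separately: Theorem~\ref{thm:2v} supplies the combinatorial edge whose closed star is biconnected, and Lemma~\ref{lem:len2} converts that combinatorial fact into a loxodromic element with small translation length. The only thing one must be careful about is the well-definedness of the minimum, i.e.\ that the set $\{h:\tau_e(h)>0\}$ is nonempty; but this nonemptiness is guaranteed by the very element $g$ produced by Lemma~\ref{lem:len2}. Thus the proof amounts to assembling these pieces and reading off the bound.
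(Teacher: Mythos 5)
Your proposal is correct and follows exactly the same route as the paper: invoke Theorem~\ref{thm:2v} to obtain an edge $e$ with $\Gamma[\St_\Gamma(e)]$ biconnected, apply Lemma~\ref{lem:len2} to get a loxodromic $g$ with $\tau_e(g)\le 2$, and conclude from the definition of $\minTL$. The only difference is that you spell out the final minimization step, which the paper leaves implicit.
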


\begin{proof}
By Theorem~\ref{thm:2v},
there exists an edge $e=\{v_1,v_2\}$ in $\Gamma$
such that $\Gamma[\St_\Gamma(e)]$ is biconnected.
By Lemma~\ref{lem:len2}, there exists a loxodromic element $g\in A(\Gamma)$
such that $\tau_e(g)\le 2$.
\end{proof}

In Theorem~\ref{thm:d_TL}, we will obtain a better upper bound
for the minimal asymptotic translation length for the case where
$\diam(\bar\Gamma)$ is large.
For this, we first show the following lemma.

\begin{lemma}\label{lem:minlox}
Let $\Gamma$ be a nontrivial biconnected graph.
Suppose that there exists an induced subgraph
$\Lambda$ of the complement graph $\bar\Gamma$ such that
\begin{itemize}
\item[(i)] $\Lambda$ is connected and $d=\diam(\Lambda)\ge 3$;
\item[(ii)] $V(\Lambda)$ dominates $\bar\Gamma$.
\end{itemize}
Then there exists a loxodromic element $g\in A(\Gamma)$
such that $\tau_e(g)\le \frac{2}{d -2}$.
Therefore
$$
\minTL_{(\Gamma^e,d_e)} (A(\Gamma)) \le \frac{2}{d -2}.
$$
%($\leftarrow$ Lemma~\ref{lem:loxo})
\end{lemma}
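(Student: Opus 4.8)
The plan is to exhibit a single loxodromic element $g\in A(\Gamma)$ with $\tau_e(g)\le \frac{2}{d-2}$; the displayed bound on $\minTL_{(\Gamma^e,d_e)}(A(\Gamma))$ then follows immediately from the definition of the minimal asymptotic translation length. First I would pass from the abstract subgraph $\Lambda$ to concrete combinatorial data: choose $u_0,u_d\in V(\Lambda)$ with $d_{\bar\Gamma}(u_0,u_d)=d_\Lambda(u_0,u_d)=d$ and a geodesic $(u_0,u_1,\dots,u_d)$ realizing it. The crucial observation is that a geodesic is always an induced path, so $\bar\Gamma[\{u_0,\dots,u_d\}]$ is the path graph $P_{d+1}$; translating to $\Gamma$, this means $\{u_i,u_j\}\in E(\Gamma)$ (equivalently $u_i,u_j$ commute in $A(\Gamma)$) precisely when $|i-j|\ge 2$, while consecutive $u_i,u_{i+1}$ do not commute. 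This long commuting backbone is the source of the denominator $d-2$.

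Next I would take $g$ to be a positive word whose support is exactly $V(\Lambda)$. Loxodromicity is then cheap: $g$ is cyclically reduced (being positive), $\bar\Gamma[\supp(g)]=\Lambda$ is connected by hypothesis~(i), and $\supp(g)=V(\Lambda)$ dominates $\bar\Gamma$ by hypothesis~(ii), so Lemma~\ref{lem:loxo} applies. The freedom left is the \emph{order} of the letters, which I would use to control the translation length. The target is to arrange $g$, and to select an edge $\{v_1,v_2\}\in E(\Gamma)$ (a natural candidate being an edge among the $u_i$ with indices at distance $\ge 2$), so that the power $g^{\,d-2}$ factors as $g^{\,d-2}=h_1h_2$ with $h_1\in Z(v_1)$ and $h_2\in Z(v_2)$. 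Granting such a factorization, Lemma~\ref{lem:tl2} gives $\tau_e(g^{\,d-2})\le 2$, and since $\tau_e(g^{\,d-2})=(d-2)\,\tau_e(g)$ we conclude $\tau_e(g)\le\frac{2}{d-2}$, as required.

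The main obstacle is precisely this construction: producing an ordering of the generators of $\Lambda$ whose $(d-2)$-nd power telescopes into a product of two vertex-centralizers. Here the commutation pattern $|i-j|\ge 2\Rightarrow$ commuting must be exploited to slide letters past one another so that, after raising to the power $d-2$, all syllables can be collected into a part commuting with $v_1$ followed by a part commuting with $v_2$; the length $d$ of the geodesic is exactly what allows $d-2$ copies of $g$ to be absorbed in this way. The vertices of $\Lambda$ not lying on the geodesic play no role in the translation estimate and should be folded into the centralizer factors $Z(v_1),Z(v_2)$, just as the full stars were used in the proof of Lemma~\ref{lem:len2}, so that $\supp(g)=V(\Lambda)$ is maintained for domination without disturbing the telescoping. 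An equivalent and possibly cleaner route to the same conclusion is to build the relevant zig-zag path in $\Gamma^e$ explicitly, as in Figure~\ref{fig:len2}, and show directly that $g^{\,d-2}$ translates it by $2$ edges; verifying that consecutive vertices of this path commute is the same computation as checking the centralizer factorization.
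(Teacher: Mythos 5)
Your high-level strategy is the same as the paper's: choose a cyclically reduced positive $g$ with $\supp(g)=V(\Lambda)$ (so loxodromicity follows from Lemma~\ref{lem:loxo}), factor $g^{d-2}=h_1h_2$ with $h_1,h_2$ in the centralizers of the two endpoints of an edge of $\Gamma$, and invoke Lemma~\ref{lem:tl2} together with $\tau_e(g^{d-2})=(d-2)\tau_e(g)$. However, you explicitly flag the factorization itself as ``the main obstacle'' and then do not carry it out, and that factorization is the entire content of the lemma; as written, the proof has a genuine gap. Moreover, the hint you do give points in a slightly wrong direction: organizing the argument around an induced geodesic $(u_0,\dots,u_d)$ and claiming that the off-geodesic vertices ``play no role'' and can simply be ``folded into the centralizer factors'' is not enough, because an arbitrary vertex of $\Lambda$ need not commute with a chosen endpoint, and the \emph{order} in which all of $V(\Lambda)$ (not just the geodesic) is written inside $g$ is exactly what must be engineered for the telescoping to work.

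The missing idea is to stratify the \emph{whole} vertex set by distance from one endpoint: with $d_\Lambda(v_0,v_d)=d$, set $V_k=\{v\in V(\Lambda): d_\Lambda(v_0,v)=k\}$, let $g_k$ be the product of the vertices of $V_k$, and take $g=g_0g_1\cdots g_d$. Since vertices in levels $i,j$ with $|i-j|\ge 2$ are non-adjacent in $\Lambda$, hence non-adjacent in $\bar\Gamma$ ($\Lambda$ being induced), hence adjacent in $\Gamma$, one gets $g_ig_j=g_jg_i$ for $|i-j|\ge 2$; in particular $g_0,\dots,g_{d-2}\in Z(v_d)$ and $g_2,\dots,g_d\in Z(v_0)$ (this is where every off-geodesic vertex is accounted for, since $d\ge 3$ forces each level into at least one of these two ranges). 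The explicit telescoping identity is then
$$
g^{d-2}=\underbrace{(g_0\cdots g_{d-2})(g_0\cdots g_{d-3})\cdots(g_0g_1)}_{\in\, Z(v_d)}\cdot\underbrace{(g_{d-1}g_d)(g_{d-2}g_{d-1}g_d)\cdots(g_2\cdots g_d)}_{\in\, Z(v_0)},
$$
verified by repeatedly sliding commuting blocks past one another, and $\{v_0,v_d\}\in E(\Gamma)$ because $d_\Lambda(v_0,v_d)\ge 3$ makes them non-adjacent in $\bar\Gamma$. Without exhibiting this (or an equivalent) ordering and identity, your argument establishes only the easy bound $\tau_e(g)\le 2$ and not the claimed $\frac{2}{d-2}$.
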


\begin{myproof}
Since $d =\diam(\Lambda) \ge 3$, there are vertices
$v_0,v_d \in V(\Lambda)$ such that
$d_{\Lambda}(v_0,v_d )=d \ge 3$.

For each $0\le k\le d $, let $V_k=\{v\in V(\Lambda): d_{\Lambda}(v_0,v)=k\}$
(see Figure~\ref{fig:minlox}).
Then each $V_k$ is nonempty and $V(\Lambda)=V_0 \sqcup \cdots\sqcup V_d$.
Let $V_k=\{v_{k1},v_{k2},\ldots v_{kr_k}\}$.
Define
\begin{align*}
g_k&=v_{k1}\cdots v_{kr_k}\quad \mbox{for $0\le k\le d$},\\
g&=g_0g_1\cdots g_d ,\\
h_0&=(g_0g_1\cdots g_{d -2})(g_0g_1\cdots g_{d -3}) \cdots (g_0g_1 g_2) (g_0g_1),\\
h_d &=(g_{d -1} g_{d }) (g_{d -2} g_{d -1} g_{d }) \cdots
(g_3\cdots g_d )(g_2\cdots g_d ).
\end{align*}
Observe the following.
\begin{itemize}
\item $\supp(g_k)=V_k$ for each $0\le k\le d$, hence $g_ig_j=g_jg_i$ if $|i-j|\ge 2$;
\item $g$ is cyclically reduced and $\supp(g)=V(\Lambda)$;
\item since $\bar{\Gamma}[\supp(g)]=\Lambda$ is connected
and $\supp(g)=V(\Lambda)$ dominates $\bar\Gamma$,
$g$ is loxodromic on $(\Gamma^e, d_e)$ (by Lemma~\ref{lem:loxo});
\item $g_0,\ldots,g_{d -2}\in Z(v_d )$, hence $h_0\in Z(v_d )$;
\item $g_2,g_3,\ldots, g_d \in Z(v_0)$, hence $h_d \in Z(v_0)$.
\end{itemize}
A straightforward computation using the commutativity
$g_ig_j=g_jg_i$ for $|i-j|\ge 2$ shows that
$$
h_0h_d=g^{d -2}.
$$
Since $d_{\Lambda}(v_0, v_d) \ge 3$,
$\{v_0,v_d\}$ is an edge of $\bar\Lambda$ and hence of $\Gamma$.
Since $h_0\in Z(v_d )$, $h_d \in Z(v_0)$ and $g^{d-2}=h_0h_d$,
we have $\tau_e(g^{d-2})\le 2$ (by Lemma~\ref{lem:tl2}).
Therefore $\tau_e(g) \le \frac{2}{d -2}$.
Since $g$ is loxodromic, we are done.
\end{myproof}

\begin{figure}
$$
\begin{xy}/r1mm/:
( 0,0)="or" *+!U{v_0} *{\bullet},
(20,0)="hd",
"or" *\xycircle(5,12){-}, "or"+(0,-10) *++!UL{V_0},
%%% -----------------------------------
"or"; "or"+"hd"="h",
"h"+(0,6)="a1" **@{-} *{\bullet} ,
"h"+(0,3)="a2" **@{-} *{\bullet} ,
"h"+(0,0) *{\vdots},
"h"+(0,-5)="a3" **@{-} *{\bullet},
"h"+(0,-8)="a4" **@{-} *{\bullet},
"h"+(0,0) *\xycircle(5,12){-}, "h"+(0,-10) *++!UL{V_1},
%%% -----------------------------------
"h"; "h"+"hd"="h",
"h"+(0,6)="a1" *{\bullet}, "a1"; "a1"-"hd", **@{-},
"h"+(0,3)="a2" *{\bullet}, "a2"; "a2"-"hd" **@{-},
"h"+(0,0) *{\vdots},
"h"+(0,-5)="a3" *{\bullet}, "a3"; "a3"-"hd" **@{-},
"h"+(0,-8)="a4" *{\bullet}, "a4"; "a4"-"hd" **@{-},
"h"+(0,0) *\xycircle(5,12){-}, "h"+(0,-10) *++!UL{V_2},
%%% -----------------------------------
"h"; "h"+"hd"+(10,0)="h",
"h"+(0,6)="a1" *{\bullet} *+!D{v_d}, "a1"; "a1"-"hd" **@{-},
"h"+(0,3)="a2" *{\bullet}, "a2"; "a2"-"hd" **@{-},
"h"+(0,0) *{\vdots},
"h"+(0,-5)="a3" *{\bullet}, "a3"; "a3"-"hd" **@{-},
"h"+(0,-8)="a4" *{\bullet}, "a4"; "a4"-"hd" **@{-},
"h"+(0,0) *\xycircle(5,12){-}, "h"+(0,-10) *++!UL{V_d},
\end{xy}\qquad
$$
\caption{Schematic diagram for the graph $\Lambda$
in Lemma~\ref{lem:minlox}}
\label{fig:minlox}
\end{figure}
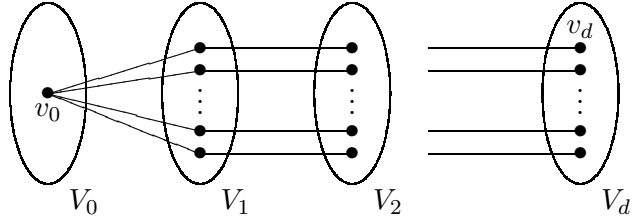

By putting $\Lambda = \bar\Gamma$ in the above lemma,
we obtain the following theorem.
\begin{theorem}\label{thm:d_TL}
Let $\Gamma$ be a biconnected graph.
If\/ $d=\diam(\bar\Gamma)\ge 3$, then
$$\minTL_{(\Gamma^e,\,d_e)}(A(\Gamma))\le \frac{2}{d-2}.
$$
\end{theorem}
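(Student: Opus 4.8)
The plan is to derive Theorem~\ref{thm:d_TL} as an immediate special case of Lemma~\ref{lem:minlox}, so the entire task reduces to verifying that the hypotheses of that lemma are met when we take $\Lambda=\bar\Gamma$. First I would observe that $\Gamma$ being biconnected together with $d=\diam(\bar\Gamma)\ge 3$ already guarantees that $\Gamma$ is nontrivial, since a trivial graph has at most one vertex and cannot have a complement of diameter $3$; in fact $\diam(\bar\Gamma)\ge 3$ forces $|V(\Gamma)|\ge 4$, so $\Gamma$ is a nontrivial biconnected graph as required by the lemma.

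Next I would check the two bulleted conditions of Lemma~\ref{lem:minlox} for the choice $\Lambda=\bar\Gamma$. Condition~(i) asks that $\Lambda$ be connected with $\diam(\Lambda)\ge 3$: since $\Gamma$ is biconnected, $\bar\Gamma$ is connected by definition, and by hypothesis $\diam(\bar\Gamma)=d\ge 3$, so this holds with the same value of $d$. Condition~(ii) asks that $V(\Lambda)$ dominate $\bar\Gamma$; but here $V(\Lambda)=V(\bar\Gamma)=V(\Gamma)$, and any set equal to the whole vertex set trivially dominates $\bar\Gamma$ because $\St_{\bar\Gamma}(V(\Gamma))=V(\Gamma)$. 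Thus both hypotheses are satisfied essentially by unwinding the definitions, with no combinatorial work needed.

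Finally I would invoke the conclusion of Lemma~\ref{lem:minlox} directly: it produces a loxodromic element $g\in A(\Gamma)$ with $\tau_e(g)\le \frac{2}{d-2}$, and states the consequence $\minTL_{(\Gamma^e,d_e)}(A(\Gamma))\le\frac{2}{d-2}$. Since the value of $d$ appearing in the lemma is precisely $\diam(\Lambda)=\diam(\bar\Gamma)$, which equals the $d$ in the theorem statement, the bound transfers verbatim and the proof concludes. I do not anticipate any genuine obstacle here: all the difficulty has been front-loaded into Lemma~\ref{lem:minlox} (and, upstream of it, into Theorem~\ref{thm:2v}), so the proof of Theorem~\ref{thm:d_TL} is a one-line specialization. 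The only point that warrants a sentence of care is confirming that the domination hypothesis is automatic when $\Lambda$ is all of $\bar\Gamma$, which I would state explicitly to make the deduction airtight.
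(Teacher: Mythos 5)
Your proposal is correct and is exactly the paper's argument: the paper derives Theorem~\ref{thm:d_TL} by putting $\Lambda=\bar\Gamma$ in Lemma~\ref{lem:minlox}, and your verification of the hypotheses (connectedness of $\bar\Gamma$ from biconnectedness, the diameter hypothesis, and the trivial domination of $\bar\Gamma$ by its full vertex set) is the same routine specialization.
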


From the above theorem, if $d=\diam(\bar\Gamma)$ is large,
then $\minTL_{(\Gamma^e,\,d_e)}(A(\Gamma))$ is small.
One may ask whether the converse holds, i.e.\
if $\minTL_{(\Gamma^e,\,d_e)}(A(\Gamma))$ is small, then $\diam(\bar\Gamma)$ is large.
The following theorem shows that this is not the case.

\begin{theorem}\label{thm:V_TL}
For any $n\ge 4$, there exists a biconnected graph $\Gamma_n$
such that
$$|V(\Gamma_n)|=n+2,\quad \diam(\Gamma_n)=\diam(\overline{\Gamma_n})=2,
\quad \minTL_{(\Gamma_n^e,\,d_e)}(A(\Gamma_n))\le \frac{2}{n-3}.$$
%(Thm D $\leftarrow$ Proposition~\ref{prop:22}, Lemma~\ref{lem:minlox})
\end{theorem}

\begin{myproof}
Let  $\Lambda_n$ be the biconnected graph mentioned in Proposition~\ref{prop:22},
and let $\Gamma_n = \overline{\Lambda_n}$.
Then $\diam(\Gamma_n)=\diam(\overline{\Gamma_n})=2$;
$|V(\Gamma_n)|=n+2$;
$P_n$ is an induced subgraph of $\overline{\Gamma_n}$;
$V(P_n)$ dominates $\overline{\Gamma_n}$.
Since $\diam(P_n)=n-1\ge 3$,
there exists a loxodromic element
$g\in A(\Gamma_n)$
such that $\tau_e(g)\le \frac{2}{\diam(P_n)-2}=\frac{2}{n-3}$
(by Lemma~\ref{lem:minlox}).
Therefore
$$
\minTL_{(\Gamma_n^e,d_e)} (A(\Gamma_n)) \le \frac{2}{n-3}.
$$
\end{myproof}

%\section*{Acknowledgements}
%%The authors are grateful to the anonymous referees for their careful reading and helpful comments.
%The first author was partially supported by NRF-2018R1D1A1B07043291.
%The second author was partially supported by NRF-2018R1D1A1B07043268.


\begin{thebibliography}{MM}

\bibitem[BSS23]{BSS23}
H.~Baik, D.~Seo and H.~Shin,
On the finiteness property of hyperbolic simplicial actions:
the right-angled Artin groups and their extension graphs,
{\it Geom.\ Dedicata} {\bf 217}(1) (2023) 7.

\bibitem[BS20]{BS20}
H.~Baik and H.~Shin,
Minimal Asymptotic Translation Lengths of Torelli Groups and Pure Braid Groups on the Curve Graph,
{\it Int.\ Math.\ Res.\ Notices} {\bf 2020}(24) (2020) 9974--9987.

\bibitem[Bow08]{Bow08}
B.H.~Bowditch,
Tight geodesics in the curve complex,
{\it Invent.\ Math.} {\bf 171}(2) (2008) 281--300.


\bibitem[GT11]{GT11}
V.~Gadre and C.-Y.~Tsai,
Minimal pseudo-Anosov translation lengths on the complex of curves,
{\it Geom.\ Topol.} {\bf 15}(3) (2011) 1297--1312.


\bibitem[Gen22]{Gen22}
A.~Genevois,
Translation lengths in crossing and contact graphs of (quasi-)median graphs,
arXiv:2209.06441.

\bibitem[HM95]{HM95}
S.~Hermiller and J.~Meier,
Algorithms and geometry for graph products of groups,
{\it J.~Algebra} {\bf 171}(1) (1995) 230--257.


\bibitem[KK13]{KK13}
S.-h.~Kim and T.~Koberda,
Embedability between right-angled Artin groups,
{\it Geom.\ Topol.} {\bf 17}(1) (2013) 493--530.



\bibitem[KK14a]{KK14a}
S.-h.~Kim and T.~Koberda,
An obstruction to embedding right-angled Artin groups in mapping class groups,
{\it Int.\ Math.\ Res.\ Not.} {\bf 2014}(14) (2014) 3912--3918.

\bibitem[KK14b]{KK14b}
S.-h.~Kim and T.~Koberda,
The geometry of the curve graph of a right-angled Artin group,
{\it Int.\ J.\ Algebra Comput.} {\bf 24}(2) (2014) 121--169.


\bibitem[KS19]{KS19}
E.~Kin and H.~Shin,
Small asymptotic translation lengths of pseudo-Anosov maps on the curve complex,
{\it Groups Geom.\ Dyn.} {\bf 13}(3) (2019) 883--907.


\bibitem[LL07]{LL07}
E.-K.~Lee and S.J.~Lee,
Translation numbers in a Garside group are rational
with uniformly bounded denominators,
{\it J.\ Pure Appl.\ Algebra} {\bf 211}(3) (2007) 732--743.

\bibitem[LL16]{LL16}
E.-K.\ Lee and S.-J.\ Lee,
Path lifting properties and embedding between RAAGs,
{\it J.\ Algebra} {\bf 448} (2016) 575--594.

\bibitem[LL18]{LL18}
E.-K.\ Lee and S.-J.\ Lee,
Embeddability of right-angled Artin groups on complements of trees,
{\it Int.\ J.\ Algebra Comput.} {\bf 28}(3) (2018) 381--394.


\bibitem[LL22]{LL22}
E.-K.\ Lee and S.-J.\ Lee,
Acylindricity of the action of right-angled Artin groups
on extension graphs,
arXiv:2212.02708.
\end{thebibliography}
\end{document}